\newenvironment{claimproof}[1][Proof of Claim]
{ \begin{proof}[#1]}
{\end{proof}  }
\theoremstyle{plain}
\newtheorem{theorem}{Theorem}[section]
\newtheorem{lemma}[theorem]{Lemma}
\newtheorem{proposition}[theorem]{Proposition}
\newtheorem{corollary}[theorem]{Corollary}
\theoremstyle{definition}
\newtheorem*{definition}{Definition}
\theoremstyle{remark}
\newtheorem*{remark}{Remark}
\newtheorem*{claim}{Claim}
\title{Cops and Robber Game with a Fast Robber \\ on Expander Graphs and Random Graphs}
\author{Abbas Mehrabian\thanks{\texttt{amehrabi@uwaterloo.ca}} \\ {\small University of Waterloo}}
\date{}
\begin{document}

\maketitle

\begin{abstract}
We consider a variant of the Cops and Robber game,
in which the robber has unbounded speed,
i.e.~can take any path from her vertex in her turn,
but she is not allowed to pass through a vertex occupied by a cop.
Let $c_{\infty}(G)$ denote the number of cops needed to capture the robber in a graph $G$
in this variant.
We characterize graphs $G$ with $c_{\infty}(G)=1$,
and give an $O(|V(G)|^2)$ algorithm for their detection.
We prove a lower bound for $c_{\infty}$ of expander graphs,
and use it to prove three things.
The first is that if $np \geq 4.2\log n$ then the random graph $G=\mathcal G(n,p)$
asymptotically almost surely has ${\eta_1}/{p} \leq c_{\infty}(G) \leq {\eta_2 \log (np)}/{p}$,
for suitable constants $\eta_1$ and $\eta_2$.
The second is that a fixed-degree random regular graph $G$ with $n$ vertices
asymptotically almost surely has $c_{\infty}(G)= \Theta(n)$.
The third is that if $G$ is a Cartesian product of
$m$ paths, then $n/4km^2 \leq c_{\infty}(G) \leq n / k$,
where $n=|V(G)|$ and
$k$ is the number of vertices of the longest path.
\end{abstract}

\section{Introduction}
\label{chp:introduction}
The game of \emph{Cops and Robber} is a perfect information game,
played in a graph $G$.
The players are a set of cops and a robber.
Initially, the cops are placed at vertices
of their choice in $G$ (where more than one cop can be placed at a vertex).
Then the robber, being
fully aware of the cops' placement, positions herself at one of the vertices of $G$.
Then the cops and
the robber move in alternate rounds, with the cops moving first;
however, players are permitted to
remain stationary in their turn if they wish.
The players use the edges of $G$ to move from vertex to vertex.
The cops win, and the game ends, if eventually a cop moves to the vertex currently occupied
by the robber; otherwise, i.e.~if the robber can elude the cops forever, the robber wins.

This game was defined (for one cop) by Winkler and Nowakowski~\cite{game_definition_1}
and Quilliot~\cite{game_definition_2}, and has been studied extensively.
For a survey of results on this game, see the survey by Hahn~\cite{survey_hahn}.
The famous open question in this area is Meyniel's conjecture, published by Frankl~\cite{large_girth},
which states that for every connected graph on $n$ vertices, $O(\sqrt n)$ cops are sufficient to capture the robber.
The best result so far is that
$$n2^{-\left(1-o(1)\right)\sqrt{\log_2 n}}$$
cops are sufficient to capture the robber.
This was proved independently by Lu and Peng~\cite{lu_peng},
and Scott and Sudakov~\cite{scott_sudakov}.

One interesting fact about the Cops and Robber game is that
many scholars have studied the game,
and yet it is not really well understood:
although the upper bound  $O(\sqrt n)$
was conjectured in 1987,
no upper bound better than $n^{1-o(1)}$ has been proved since then.
As an another example, no efficient approximation algorithm
for finding the number of cops needed to capture the robber in a given graph has been developed.

One might try to change the rules of the game a little in order to get a more approachable problem,
and/or to understand what property of the original game causes the difficulty.
Several variations of the game have been studied,
by changing the rules slightly,
e.g.~by limiting the visibility of the cops~\cite{regular_visible_robber},
by limiting the visibility of both players~\cite{randomized_local_visibility},
by changing the definition of capturing~\cite{shooting_cop},
or by allowing the players to move only in a certain direction along each edge~\cite{variations}.

The approach chosen 
by Fomin, Golovach, Kratochv\'{\i}l, Nisse, and Suchan~\cite{fast_robber_first_journal} is
to allow the robber move faster than the cops.
Inspired by their work,
in this paper we let the robber take \emph{any path} from her
current position in her turn,
but she is not allowed to pass through a vertex occupied by a cop.
The parameter of interest is the \emph{cop number} of $G$,
which is defined as the minimum number of cops needed to ensure that the cops can win.
We denote the cop number of $G$ by $c_{\infty}(G)$,
in which the $\infty$ at the subscript indicates that the robber has unbounded speed.
A nice fact about this variation is its analogy with
the so-called Helicopter Cops and Robber game
(defined in~\cite{helicopter}).
This is a real-time pursuit-evasion game with a robber of unbounded speed,
for which Seymour and Thomas have shown that the number of cops needed equals the treewidth
of the graph (which is a fairly well understood parameter)
plus one~\cite{helicopter}.
However, one should not be deceived by this analogy;
the cop number can be arbitrarily smaller than the treewidth:
any graph with small domination number and large treewidth
(e.g.,~a complete graph) is such an example.

The $c_{\infty}$ variant was first studied
by Fomin, Golovach, Kratochv{\'{\i}}l~\cite{fast_robber_first}.
They proved that computing $c_{\infty}(G)$ is an NP-hard problem, even if $G$
is a split graph.
(A \emph{split graph} is a graph whose vertex set can be partitioned into a clique and an independent set.)
This variant was further studied by Frieze, Krivelevich and Loh~\cite{variations},
who showed that for each $n$, there exists a connected graph on $n$ vertices with cop number $\Theta(n)$.
As demonstrated in~\cite{variations}, expansion properties of a graph are closely connected with its cop number.
In this paper we further study this connection.
We obtain some lower bounds for the cop number of a graph in terms of its isoperimetric numbers (Section~\ref{chp:expander}).
Then we use these results to give lower bounds for the cop number of random graphs (Section~\ref{chp:random})
and for the cop number of  Cartesian products of graphs (Section~\ref{chp:product}).
In Section~\ref{chp:copwin}, we give a characterization of graphs $G$ with $c_{\infty}(G) = 1$,
and provide an $O(|V(G)|^2)$ algorithm for deciding if $G$ has this property.
%
%
%
Let $G$ be a connected graph on $n$ vertices with maximum degree $\Delta$.
Let $\iota_e(G)$ and $\iota_v(G)$ denote the edge-isoperimetric and vertex-isoperimetric numbers of $G$, respectively.
In Section~\ref{chp:expander} we prove that for every $G$,
$$c_{\infty}(G) \geq \max \left\{ \frac{\iota_e n}{2\Delta^2}, \frac{\iota_v n}{4\Delta} \right\}.$$
In the subsequent two sections we give some applications of this result.
In Section~\ref{chp:random} we show that if $np \geq 4.2 \log n$, then
asymptotically almost surely the random graph $G = \mathcal G(n,p)$ has
$$c_{\infty}(G) = \Omega\left(\frac{n}{\Delta}\right) = \Omega\left(1/p\right).$$
If also $p=1-\Omega(1)$, then we prove that asymptotically almost surely $G$ has
$$c_{\infty}(G) = O(\log (np) / p).$$
In Section~\ref{chp:random} we also show that for every fixed $d$,
asymptotically almost surely a randomly chosen labelled $d$-regular graph $G$ on $n$ vertices has
$$c_{\infty}(G) =\Theta(n).$$
Let $P_n$ and $C_n$ denote a path and a cycle with $n$ vertices, respectively.
In Section~\ref{chp:product} we prove that if $G$ is the Cartesian product of $P_{n_1},P_{n_2},\dots,P_{n_m}$,
where $n_1 = \max\{n_i : 1\leq i\leq m\}$, then
$$\frac{n}{4n_1 m^2} \leq c_{\infty}(G) \leq \frac{n}{n_1}.$$
Moreover, if $G$ is the Cartesian product of $C_{n_1},C_{n_2},\dots,C_{n_m}$,
where $n_1 = \max\{n_i : 1\leq i\leq m\}$ is even, then
$$\frac{n}{2n_1 m^2} \leq c_{\infty}(G) \leq \frac{2n}{n_1}.$$
In Section~\ref{chp:samespeed} we briefly discuss a variation in which the cops and the robber have the same speed,
and we conclude with some open problems in Section~\ref{chp:future}.

\subsection{Preliminaries and notation}
Let $G$ be the graph in which the game is played.
In this paper $G$ is always finite,
and $n$  always denote the number of vertices of $G$.
Write $\delta$ and $\Delta$ for the minimum and maximum degree of $G$.
We will assume that $G$ is simple, because deleting multiple edges or loops
does not affect the set of possible moves of the players.
We consider only connected graphs, since the cop number of a disconnected graph
obviously equals the sum of the cop numbers for each connected component.
As we are only interested in studying the cop number,
we may assume without loss of generality
that the cops choose vertices of our choice in the beginning,
since they can move to the vertices of their choice later.

For a subset $A$ of vertices, the \emph{neighbourhood} of $A$, written $N(A)$,
is the set of vertices that have a neighbour in $A$,
and the \emph{closed neighbourhood} of $A$, written $\overline{N}(A)$, is the union $A \cup N(A)$.
If $A=\{v\}$ then we may write $N(v)$ and $\overline{N}(v)$ instead of $N(A)$ and $\overline{N}(A)$, respectively.
A \emph{dominating set} is a subset $A$ of vertices with $V(G) = \overline{N}(A)$,
and the \emph{domination number} of $G$ is the minimum size of a dominating set of $G$.
The subgraph induced by $A$ is written $G[A]$, and the subgraph induced by $V(G) - A$ is written $G-A$.

\section{Characterization of Graphs with Cop Number One}
\label{chp:copwin}

For the original Cops and Robber game,
graphs in which a single cop can capture the robber
have been characterized independently
by Nowakowski and Winkler~\cite{game_definition_1}
and by Quilliot~\cite{game_definition_2}.
In this section we characterize graphs $G$ with $c_{\infty}(G)=1$,
and give an $O(n^2)$ algorithm for their detection.

\begin{definition}[block, block tree]
Let $G$ be a connected graph.
By a \emph{block} of $G$, we mean either a maximal 2-connected subgraph of $G$,
or an edge of $G$ that is not contained in any 2-connected subgraph.
We may associate with $G$ a bipartite graph $B(G)$ with bipartition $(\mathcal B, S)$,
where $\mathcal B$ is the set of blocks of $G$ and $S$ is the set of cut vertices of $G$,
a block $B$ and a cut vertex $v$ being adjacent in $B(G)$ if and only if $B$ contains $v$.
The graph $B(G)$ is a tree, called the \emph{block tree} of $G$ (see for example~\cite{graph_theory}, page 121).
\end{definition}

\begin{lemma}
\label{lem:blocks_domination_one}
If $c_{\infty}(G)=1$ then every block of $G$ has domination number one.
\end{lemma}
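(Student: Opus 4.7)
The plan is to prove the contrapositive: if some block $B$ of $G$ has domination number greater than one, then a single cop cannot win. An edge has domination number one, so such a $B$ must be a maximal $2$-connected subgraph; in particular $|V(B)| \geq 3$ and no vertex of $B$ is adjacent to every other vertex of $B$.

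I will exhibit a winning strategy for the robber that keeps her inside $B$ for the entire game. The invariant I want to maintain is that at the end of each of the robber's turns, she occupies some vertex $r \in V(B) \setminus \overline{N}(c)$, where $c$ is the cop's current vertex. The initial setup is handled by choosing $r_0 \in V(B) \setminus \overline{N}(c_0)$, and the same case analysis used to preserve the invariant shows this set is nonempty.

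The main step is to verify that, whatever the cop does, the robber can restore the invariant. Suppose the cop stands at $u$ with the robber at $r \in V(B) \setminus \overline{N}(u)$, and the cop moves to some $u' \in \overline{N}(u)$. I split into two cases. If $u' \in V(B)$, then because $B$ has no dominating vertex there exists $r' \in V(B) \setminus \overline{N}(u')$; since $B$ is $2$-connected, $B - u'$ is connected, so the robber can travel from $r$ to $r'$ along a path inside $B - u'$. If $u' \notin V(B)$, the key claim is that $u'$ has at most one neighbour in $V(B)$: if $u'$ were adjacent to two or more vertices of $B$, then adjoining $u'$ to $B$ would yield a strictly larger $2$-connected subgraph of $G$ (a short verification using $|V(B)| \geq 3$ together with the $2$-connectivity of $B$), contradicting maximality of $B$. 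Consequently $|V(B) \setminus \overline{N}(u')| \geq |V(B)| - 1 \geq 2$, and since $B$ is connected and $u' \notin V(B)$, any path in $B$ from $r$ to such an $r'$ trivially avoids $u'$. In either case the invariant is restored, so the robber evades capture forever, contradicting $c_{\infty}(G) = 1$.

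The step I expect to require the most care is the external case, where a vertex outside $B$ could in principle dominate many vertices of $B$ through edges incident to cut vertices of $G$; the maximality of $B$ as a $2$-connected subgraph is precisely the tool that rules this out. The internal case is handled cleanly by the $2$-connectivity of $B$ itself, which ensures that removing any single vertex still leaves $B$ connected and hence navigable by the robber.
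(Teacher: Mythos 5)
Your proof is correct and follows essentially the same strategy as the paper's: the robber stays in the block $B$, maintaining the invariant that she sits at some $r \in V(B) \setminus \overline{N}(v)$ where $v$ is the cop's position, and uses $2$-connectivity of $B$ to reach the next safe vertex while avoiding the cop. In fact your treatment is slightly more careful than the paper's, since you explicitly justify (via maximality of $B$) why a cop standing \emph{outside} $B$ still leaves a safe vertex in $B$, a case the paper's proof passes over by appealing only to the domination number of $B$.
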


\begin{proof}
Suppose for the sake of contradiction that $c_{\infty}(G)=1$ and $B$ is a block of $G$ with domination number larger than one.
So $B$ is a 2-connected subgraph.
Assume that there is a single cop in the game.
We claim that the robber can play in such a way that, at the end of each round,
if the cop is at a vertex $v$, then the robber is at a vertex $r \in V(B) \setminus \overline{N}(v)$.
This shows that she can elude the cop forever, which contradicts the assumption $c_{\infty}(G)=1$.

Assume that the cop starts at $v_0 \in V(G)$.
Since $B$ has domination number larger than one, there exists $r_0 \in V(B) \setminus \overline{N}(v_0)$.
The robber starts at $r_0$.
For every positive $i$, suppose that in round $i$, the cop moves to $v_i$.
Since $B$ has domination number larger than one, there exists $r_i \in V(B) \setminus \overline{N}(v_i)$.
As $B$ is 2-connected, there are two disjoint $(r_{i-1},r_i)$-paths in $G$,
so there exists an $(r_{i-1},r_i)$-path in $G$ that does not contain $v_i$.
The robber has unbounded speed and moves along that path to $r_i$, and the proof is complete.
\end{proof}

\begin{definition} [directed hole, hallway]
Let $u$ be a cut vertex of $G$, and $B$ be a block of $G$ containing $u$.
If $\{u\}$ is not a dominating set for $B$, then the pair $(B,u)$ is called a \emph{directed hole}.
Let $B,B'$ be two distinct blocks of $G$, and $B u_1 \dots u_k B'$ be the unique $(B,B')$-path in $B(G)$.
If both $(B,u_1)$ and $(B',u_k)$ are directed holes, then the pair $\{B, B'\}$ is called a \emph{hallway}.
\end{definition}

Note that if a block $B$ is not 2-connected,
then it consists of a single edge, and each of its vertices makes
a dominating set.
Hence, if $\{B,B'\}$ is a hallway, then both $B$ and $B'$ are maximal 2-connected subgraphs.
We will prove that a graph $G$ has $c_{\infty}(G) = 1$
if and only if each of its blocks has domination number one,
and it does not have a hallway.

\begin{lemma}
\label{lem:no_hallway}
If $c_{\infty}(G) = 1$, then $G$ does not have a hallway.
\end{lemma}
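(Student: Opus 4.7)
The plan is to prove the contrapositive: if $G$ has a hallway $\{B, B'\}$, I will exhibit a robber strategy that defeats one cop. Let $B u_1 \cdots u_k B'$ be the unique block-tree path, and by the directed-hole property fix $b \in V(B) \setminus \overline{N}(u_1)$ and $b' \in V(B') \setminus \overline{N}(u_k)$. The strategy will maintain, at the end of every round, the invariant that the robber's vertex $r$ lies in $V(B) \cup V(B')$ and satisfies $r \notin \overline{N}(v)$, where $v$ is the cop's current vertex; this invariant immediately prevents capture.

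To run the inductive step I will exploit three structural facts. First, $u_1$ is a cut vertex separating $V(B) \setminus \{u_1\}$ from $V(G) \setminus V(B)$, and symmetrically for $u_k$; from this I will deduce that if $v \in V(B) \setminus \{u_1\}$ then $\overline{N}(v) \cap V(B') \subseteq \{u_1\}$ (so $b'$ is safe in that case), and if $v \notin V(B) \cup V(B')$ then $\overline{N}(v)$ meets each of $V(B), V(B')$ in at most one cut vertex. Second, $B$ and $B'$ are $2$-connected, so $B - w$ and $B' - w$ stay connected for any single vertex $w$, giving the robber routing flexibility. Third, $b$ and $b'$ are non-adjacent, because an edge between them would produce a second $(B, B')$-path in the block tree and force $b = u_1$ or $b' = u_k$.

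For the initial placement, a short block-tree argument analogous to the third fact above rules out any common neighbor of $b$ and $b'$, so the cop's initial position $v_0$ cannot lie in $\overline{N}(b) \cap \overline{N}(b')$ and the robber can start safely at $b$ or $b'$. For the inductive step I will then do a case analysis on the cop's new vertex $v$: if $v \in V(B) \setminus \{u_1\}$, the robber flees to $b'$ along a path $r \to u_1 \to \cdots \to u_k \to b'$, detouring around $v$ inside $B$ by $2$-connectedness; if $v = u_1$, she retreats to $b$ inside the connected graph $B - u_1$; the cases $v \in V(B') \setminus \{u_k\}$ and $v = u_k$ are symmetric; and if $v \notin V(B) \cup V(B')$, she stays in her current block and dodges the at most one threatened vertex there. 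The main obstacle is the bookkeeping of this case analysis, namely simultaneously verifying in each case both that a safe destination exists in $V(B) \cup V(B')$ and that a $v$-avoiding path reaches it; once the three structural facts are in hand, each case reduces to a one-line combination of cut-vertex separation (for safety) with $2$-connectedness (for path routing).
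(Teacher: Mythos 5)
Your overall plan is the same idea as the paper's proof: against one cop, the robber shuttles between the two blocks of the hallway, using the directed-hole property to supply safe target vertices $b\in V(B)\setminus\overline{N}(u_1)$ and $b'\in V(B')\setminus\overline{N}(u_k)$, and $2$-connectedness plus cut-vertex separation to route around the cop. However, one case of your analysis fails as written. Your case split is keyed only on the cop's new vertex $v$, while your invariant allows the robber to sit anywhere in $V(B)\cup V(B')$. In the case $v=u_1$ you prescribe ``retreat to $b$ inside $B-u_1$''; but if the robber is currently in $V(B')$, every $(r,b)$-path in $G$ must pass through the cut vertex $u_1$, which the cop occupies, so $b$ is unreachable. (Symmetrically for $v=u_k$ with the robber in $V(B)$; and the route $r\to u_1\to\cdots\to u_k\to b'$ in your first case also tacitly assumes $r\in V(B)$.) The fix is immediate --- in that situation send the robber to $b'$ inside $B'$, which is safe because $\mathrm{dist}(u_1,b')\geq 2$ and reachable because $B'-u_1$ is connected --- but the case analysis must also condition on which block the robber currently occupies, which roughly doubles the bookkeeping you already identify as the main obstacle.

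The paper sidesteps all of this by maintaining the stronger invariant that the robber is always at $b$ or at $b'$, together with the single observation that $\mathrm{dist}(b,b')\geq 4$: any $(b,b')$-path passes through $u_1$ and $u_k$, and $\mathrm{dist}(b,u_1)\geq 2$, $\mathrm{dist}(u_k,b')\geq 2$. Consequently no cop position threatens both $b$ and $b'$ simultaneously, and the inductive step collapses to: if the robber's current vertex (say $b$) is threatened, the cop lies in $\overline{N}(b)$, hence off the $(u_1,u_k)$- and $(u_k,b')$-segments and on at most one of the two internally disjoint $(b,u_1)$-paths guaranteed by $2$-connectedness, so the robber runs to $b'$. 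You may want to adopt that invariant: it replaces your three structural facts and the multi-way case split with one distance computation, and it automatically handles the initial placement as well.
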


\begin{proof}
Suppose for the sake of contradiction that $c_{\infty}(G)=1$ and $\{B,B'\}$ is a hallway.
By the discussion above, $B$ and $B'$ are maximal 2-connected subgraphs.
Let $B u_1 \dots u_k B'$ be the unique $(B,B')$-path in $B(G)$.
Assume that there is a single cop in the game.
Since $(B,u_1)$ is a directed hole, there exists $b\in V(B) \setminus \overline{N}(u_1)$.
Similarly, since $(B',u_k)$ is a directed hole, there exists $b'\in V(B') \setminus \overline{N}(u_k)$.
Note that the distance between $b$ and $u_1$ in $G$ is at least 2,
and the distance between $u_k$ and $b'$ in $G$ is at least 2,
so the distance between $b$ and $b'$ in $G$ is at least 4.
We claim that the robber can play in such a way that, at the end of each round,
if the cop is at a vertex $v$, then she is at a vertex $r \in \{b,b'\} \setminus \overline{N}(v)$.
This shows that she can elude the cop forever, which contradicts the assumption $c_{\infty}(G)=1$.

Assume that the cop starts at $v_0 \in V(G)$.
As the distance between $b$ and $b'$ in $G$ is at least 4, there exists $r_0 \in \{b,b'\} \setminus \overline{N}(v_0)$ and the robber starts at $r_0$.
For every positive $i$, suppose that in round $i$, the cop moves to $v_i$.
At the end of round $i-1$, the robber was either at $b$ or at $b'$,
and by symmetry we may assume that she was at $b$.
If $b\notin \overline{N}(v_i)$, then the robber remains at $b$.
Otherwise $b\in \overline{N}(v_i)$ so
$v_i \neq u$ since $b\notin\overline{N}(u_1)$,
and $b'\notin \overline{N}(v_i)$ since the distance between $b$ and $b'$
in $G$ is at least 4.
There exists two disjoint $(b, u_1)$-paths, thus at least one of them is cop-free.
There is also a cop-free $(u_1,u_k)$-path and a cop-free $(u_k,b')$-path so the robber can move to $b'$ in her turn.
\end{proof}

The two above lemmas prove the ``only if'' part of the result we are going to prove.
For the other direction, we need another definition and a lemma.

\begin{definition}[end block]
Let $G$ be a connected graph such that $B(G)$ has more than one vertex.
The blocks of $G$ which correspond to leaves of $B(G)$
are referred to as its \emph{end blocks}.
\end{definition}

\begin{lemma}
\label{contracting_end_blocks}
Let $B$ be an end block of graph $G$, and $u$ be the unique cut vertex of $G$ contained in $B$.
Assume that $\{u\}$ is a dominating set for $B$.
Let $H$ be the graph obtained by contracting the subgraph $B$ into vertex $u$.
Then we have $c_{\infty}(H) \geq c_{\infty}(G)$.
\end{lemma}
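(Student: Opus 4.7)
The plan is to show $c_{\infty}(G)\le c_{\infty}(H)$ by simulating an optimal $H$-strategy inside $G$. The first step is a structural observation: because $B$ is an end block whose only cut vertex in $G$ is $u$, no vertex of $V(B)\setminus\{u\}$ can have a neighbour outside $V(B)$, for otherwise that vertex would itself be a cut vertex of $G$ lying in $B$. Hence contracting $B$ to $u$ is the same as deleting $V(B)\setminus\{u\}$, so $V(H)\subseteq V(G)$ and $E(H)\subseteq E(G)$, and in particular every cop move that is legal in $H$ is legal in $G$.

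I would then fix a winning strategy $\sigma$ for $k=c_{\infty}(H)$ cops in $H$ and define the projection $\phi\colon V(G)\to V(H)$ by $\phi(v)=u$ if $v\in V(B)$ and $\phi(v)=v$ otherwise. The $k$ cops in $G$ start at the positions prescribed by $\sigma$ and shadow, move for move, whatever $\sigma$ does in a ``virtual $H$-game'' whose virtual robber always sits at $\phi(r_G)$, where $r_G$ is the actual robber's current vertex.

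The main step, and the one I expect to be the chief obstacle, is to check that this virtual game is a legal play in $H$ up until the virtual robber is caught, so that $\sigma$ remains applicable. A cop-free $G$-path used by the actual robber projects under $\phi$ to a walk in $H$ whose consecutive vertices are equal or adjacent, using the structural fact above. This walk can collide with a cop only at $u$, and only if the robber's $G$-path enters $V(B)\setminus\{u\}$ while a cop occupies $u$; but the sole access to $V(B)\setminus\{u\}$ from outside is via $u$, so this can happen only when the actual robber already sat in $V(B)\setminus\{u\}$ at the start of the round, in which case the virtual robber already coincided with a cop at $u$ and the $H$-game is effectively over.

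To close the argument I handle the endgame. Once $\sigma$ captures the virtual robber at some round $T$, either $r_G\notin V(B)\setminus\{u\}$, so $\phi(r_G)=r_G$ and the capturing cop's move also captures her in $G$; or the cop has just moved onto $u$ while the actual robber sits at some $v\in V(B)\setminus\{u\}$. In that second case I crucially invoke the hypothesis that $\{u\}$ dominates $B$: the robber cannot escape $V(B)\setminus\{u\}$, since all exits pass through the now-blocked $u$, and $u$ is adjacent in $G$ to every vertex of $V(B)\setminus\{u\}$, so abandoning $\sigma$ and having the cop at $u$ step onto the robber on the next cop turn finishes the capture in $G$.
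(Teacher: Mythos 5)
Your proposal is correct and follows essentially the same route as the paper's proof: simulate a winning $H$-strategy in $G$ while projecting the robber's position in $B$ to $u$, then use the hypothesis that $\{u\}$ dominates $B$ to finish the capture when the virtual robber is caught at $u$. Your additional verification that $H$ is a subgraph of $G$ and that the projected play is legal only makes explicit what the paper leaves implicit.
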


\begin{proof}
We need to show that for every positive $c$, if $c$ cops can capture the robber in $H$,
then $c$ cops can capture the robber in $G$.
Assume that $c$ cops have a capturing strategy in $H$.
They may use the following strategy in $G$:
whenever the robber is at a vertex $r \in V(H)$, they move according to their strategy in $H$,
and when the robber moves to a vertex in $r\in V(G) \setminus V(H)$, they just ``imagine'' that the robber
is at $u$, and again  move according to their strategy in $H$.
Since the cops' strategy in $H$ is winning, they eventually will either capture the robber in $H$,
or capture the ``imagined'' robber at $u$.
In the former case, the robber is captured in $G$ as well.
In the latter case, there would be a cop at $u$ and the robber would be in $V(G) \setminus V(H)$.
Now, that cop can capture the robber in the next move, as $\{u\}$ is a dominating set for $B$, and $V(G) \setminus V(H) \subseteq V(B)$.
\end{proof}

We are now ready to prove the main result of this section.

\begin{theorem}
A connected graph $G$ has $c_{\infty}(G) = 1$
if and only if each of its blocks has domination number one,
and it does not have a hallway.
\end{theorem}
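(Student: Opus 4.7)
The ``only if'' direction is already delivered by Lemmas~\ref{lem:blocks_domination_one} and~\ref{lem:no_hallway}, so my plan is to prove the ``if'' direction by induction on the number $b(G)$ of blocks of $G$, using Lemma~\ref{contracting_end_blocks} as the reduction step. The base case $b(G)=1$ is easy: $G$ is either a single edge or a maximal $2$-connected subgraph, and in either case its domination number is one, so placing the cop on a dominating vertex of $G$ captures the robber on the cop's first move.

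For the inductive step I would assume $b(G)\geq 2$, in which case $B(G)$ has at least two leaves and hence $G$ has at least two end blocks. The first key step is to locate an end block $B$, with unique cut vertex $u$, such that $\{u\}$ dominates $B$. If no such end block existed, then every end block would form a directed hole with its cut vertex, and choosing any two end blocks would yield a hallway, because in $B(G)$ the unique path between two end blocks begins and ends with their respective cut vertices. This would contradict the no-hallway hypothesis, so a good end block $B$ exists. I would then contract $B$ into $u$ to obtain a graph $H$ with $b(H)=b(G)-1$ and invoke Lemma~\ref{contracting_end_blocks} to conclude $c_{\infty}(G)\leq c_{\infty}(H)$.

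The main obstacle will be verifying that $H$ inherits both hypotheses of the theorem, so that induction can be applied. Since $B$ is an end block, contracting it has the effect of deleting $V(B)\setminus\{u\}$, so the blocks of $H$ are exactly the blocks of $G$ other than $B$; each is unchanged and therefore still has domination number one. For the no-hallway property, suppose towards a contradiction that $H$ had a hallway $\{B_1,B_2\}$ with block-tree path $B_1 v_1 \dots v_\ell B_2$ in $B(H)$. Each $v_i$ lies in at least two blocks of $H$, and those are also blocks of $G$, so each $v_i$ is a cut vertex of $G$; moreover, because $B$ is a leaf of $B(G)$, the unique path from $B_1$ to $B_2$ in $B(G)$ is the same sequence $B_1 v_1 \dots v_\ell B_2$. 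Finally, the condition that $(B_1,v_1)$ and $(B_2,v_\ell)$ are directed holes depends only on the vertex sets of $B_1$ and $B_2$ together with their cut vertices, all of which are unchanged in the passage from $G$ to $H$, so $\{B_1,B_2\}$ would also be a hallway in $G$, contradicting the hypothesis. Once this verification is in place, the inductive hypothesis yields $c_{\infty}(H)=1$ and therefore $c_{\infty}(G)=1$.
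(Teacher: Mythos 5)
Your proof is correct and follows essentially the same route as the paper: both directions rest on Lemmas~\ref{lem:blocks_domination_one}, \ref{lem:no_hallway}, and \ref{contracting_end_blocks}, with the key observation that if no end block is dominated by its cut vertex then two end blocks form a hallway. The only difference is organizational --- you run an induction and re-verify the hypotheses after each contraction, whereas the paper contracts until no further operation is possible and checks the hallway condition once on the final graph.
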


\begin{proof}
If $c_{\infty}(G) = 1$ then by Lemma~\ref{lem:blocks_domination_one}  each of the blocks of $G$ has domination number one,
and by Lemma~\ref{lem:no_hallway}, $G$ does not have a hallway.

Conversely, let $G$ be a connected graph such that each of its blocks has domination number one,
and it does not have a hallway.
We perform the following operation on $G$:
let $B$ be an arbitrary end block of $G$, and $u$ be the unique cut vertex of $G$ contained in $B$.
If $\{u\}$ is a dominating set for $B$, then we contract the subgraph $B$ into vertex $u$.
We repeat this operation until no such end block exists.
Let $H$ be the resulting graph.
Note that each of the blocks of $H$ is also a block of $G$.

\begin{claim}
The graph $H$ has a single block.
\end{claim}
\begin{claimproof}
If $H$ has more than one block, then since $B(H)$ is a tree, it has at least two leaves.
Let $B$ and $B'$ be two end blocks of $H$, $u$ and $u'$ be the unique cut vertices of $H$ with $u \in V(B)$ and $u' \in V(B')$.
Since we cannot perform the above operation on $H$, we know that $\{u\}$ is not a dominating set for $B$,
and $\{u'\}$ is not a dominating set for $B'$.
But then $\{B,B'\}$ would be a hallway in $G$, contradiction!
\end{claimproof}

Each block of $H$ is also a block of $G$, hence $H$ has domination number one, thus $c_{\infty}(H) = 1$.
Lemma~\ref{contracting_end_blocks} gives $c_{\infty}(G)\leq c_{\infty}(H)$,
and the proof is complete.
\end{proof}

We gave a mathematical characterization for graphs $G$ with $c_{\infty}(G)=1$.
Using this we give a simple algorithm for detecting such graphs.

\begin{corollary}
Let $G$ be a connected graph on $n$ vertices.
There exists an $O(n^2)$ algorithm to decide whether $c_{\infty}(G) = 1$.
\end{corollary}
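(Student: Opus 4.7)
The plan is to translate the characterization---every block of $G$ has domination number one, and $G$ contains no hallway---into a three-stage algorithm, each stage running in $O(n^2)$ time.

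First, I would construct the block tree $B(G)$ together with the lists $\mathcal B$ of blocks and $S$ of cut vertices, using any standard DFS-based block-cut-tree procedure; this takes $O(|V(G)|+|E(G)|)=O(n^2)$ time.

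Second, I would run a local test on each block. For each block $B$, compute $\deg_{G[V(B)]}(v)$ for every $v\in V(B)$: then $B$ has domination number one iff some $v\in V(B)$ attains $\deg_{G[V(B)]}(v)=|V(B)|-1$, and a cut vertex $u\in V(B)\cap S$ gives a directed hole $(B,u)$ exactly when $\deg_{G[V(B)]}(u)<|V(B)|-1$. Since each edge of $G$ lies in exactly one block and $\sum_B|V(B)|<3n$ (non-cut vertices appear once, cut vertices appear once per incident block-tree edge, and $|E(B(G))|<2n$), this stage fits in $O(n^2)$. If some block fails the domination test, output ``no.''

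Third, I would detect hallways. The number of directed holes is at most $|E(B(G))|<2n$. For each directed hole $(B,u)$, run a BFS in $B(G)$ starting from $u$ without revisiting $B$; for every block $B'$ reached, its predecessor in the BFS is a cut vertex $u'$ which is precisely the first cut vertex on the $B'$-to-$B$ path in $B(G)$. If $(B',u')$ is also a directed hole then $\{B,B'\}$ is a hallway and we output ``no''; if all BFS traversals complete without a match, output ``yes.'' Each BFS costs $O(|V(B(G))|)=O(n)$ and at most $2n$ of them are run, for $O(n^2)$ in total.

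Correctness is immediate from the characterization theorem. The only subtlety---the main obstacle---is keeping this third stage quadratic rather than cubic: a naive enumeration of all $O(n^2)$ pairs of directed holes with per-pair path tracing in $B(G)$ would cost $O(n^3)$, but running one BFS per directed hole (or, equivalently, preprocessing $B(G)$ for $O(1)$-time LCA queries and then scanning the $O(n^2)$ pairs) amortizes the path exploration and keeps the whole procedure within $O(n^2)$.
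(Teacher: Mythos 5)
Your proposal is correct and follows essentially the same approach as the paper: build the block tree, test each block for domination number one while recording the directed holes, and then search $B(G)$ for a hallway. The only differences are implementation details (a universal-vertex degree test instead of exhaustive domination search, and one BFS per directed hole instead of the paper's single simultaneous DFS on $B(G)$), both of which stay within the $O(n^2)$ bound.
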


\begin{proof}
The block tree of $G$ can be built in time $O(|E(G)|)$ using depth-first search (see for example~\cite{graph_theory}, page 142).
If block $B$ has $m$ vertices, then it is possible to find in time $O(m^2)$
all vertices $u\in V(B)$ such that $\{u\}$ is a dominating set for $B$ (using exhaustive search).
Hence in time $O(n^2)$ one can determine if all blocks of $G$ have domination number one,
and also find all directed holes $(B,u)$.
Using a simultaneous depth-first search on $B(G)$ starting from all the directed holes,
it is possible to decide if there is a hallway in $G$ in time $O(|E(B(G))|) = O(n)$.
Hence the total running time of the algorithm is $O(n^2)$.
\end{proof}


\section[Expander Graphs] {Lower Bounds for Expander Graphs}
\label{chp:expander}

\begin{definition}[edge-isoperimetric number, vertex-isoperimetric number]
Let $G$ be a graph.
For a subset $S$ of vertices of $G$, write $\partial S$ for the set of edges with exactly one endpoint in $S$.
Define the \emph{edge-isoperimetric} and \emph{vertex-isoperimetric} numbers of $G$ as
$$\iota_e(G) = \min_{|S| \leq n/2} \frac {|\partial S|}{|S|},$$
$$\iota_v(G) = \min_{|S| \leq n/2} \frac {|{N}(S) \setminus S|}{|S|}.$$
\end{definition}

Note that for any graph $G$ we have $\iota_e(G) \leq \Delta$ (by taking $S$ to be any single vertex)
and $\iota_v(G) \leq 1$ (by taking $S$ to be any subset with $n/2$ vertices).

%
%

In this section we prove that for every graph $G$, we have
$$c_{\infty} (G)   \geq \frac{\iota_e n}{\Delta^2 - \Delta + \iota_e(\Delta+1)} \geq \frac{\iota_e n}{2\Delta^2},$$
and
$$c_{\infty}(G) \geq \max\left\{\frac{\iota_v n}{3\Delta + \iota_v(\Delta+1)}, \frac{\iota_v n}{4\Delta}\right\}.$$

\begin{lemma}
\label{lem:large_components}
Let $m$ be a positive integer such that for every subset $S$ of at most $m$ vertices,
$G-\overline{N}(S)$ has a connected component of size larger than $n/2$. Then $c_{\infty}(G) > m$.
\end{lemma}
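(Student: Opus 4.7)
The plan is to exhibit an infinite evasion strategy for the robber against any $m$ cops, which immediately yields $c_{\infty}(G) > m$. The key device is an invariant that I would maintain at the end of every robber's turn: if the cops currently occupy a set $S$ with $|S| \le m$, then the robber sits at a vertex $r$ lying in a connected component $C$ of $G - \overline{N}(S)$ of size greater than $n/2$. Such a component is guaranteed to exist by the hypothesis on $m$, and because its size exceeds $n/2$ it is in fact unique.

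Initialization is immediate: after the cops pick their starting positions $S_0$, the robber, moving second, just places herself anywhere in the large component of $G - \overline{N}(S_0)$. For the inductive step, suppose the invariant holds with cop set $S$ and robber at $r$ in the large component $C$ of $G - \overline{N}(S)$. The cops then move to a new set $S'$ with $|S'| \le m$. Two features of this move are crucial. First, since every cop walks along at most one edge, $S' \subseteq \overline{N}(S)$; second, because $r \notin \overline{N}(S)$, the robber is at distance at least $2$ from every cop in $S$, so no cop can reach $r$ in this move, and the robber is not captured.

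It remains to describe the robber's move. Apply the hypothesis again to $S'$ to obtain a component $C'$ of $G - \overline{N}(S')$ with $|C'| > n/2$. Since $|C| > n/2$ and $|C'| > n/2$, we have $C \cap C' \neq \emptyset$; pick any $r' \in C \cap C'$. Because $C$ is connected in $G - \overline{N}(S)$, there is a $(r, r')$-path lying entirely in $C$, hence entirely outside $\overline{N}(S)$, and in particular outside $S' \subseteq \overline{N}(S)$. The robber, having unbounded speed, follows this path to $r'$. Now $r' \in C'$ restores the invariant for the next round.

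The main point that has to be handled carefully is the coupling of the two applications of the hypothesis, at $S$ and at $S'$. Everything hinges on the pairing of the two observations $S' \subseteq \overline{N}(S)$ (which makes a path inside $C$ automatically safe against the new cops) and $|C|, |C'| > n/2$ (which forces a common vertex in the old and new large components). Once these are in hand the induction runs without incident, the robber survives forever, and the lemma follows.
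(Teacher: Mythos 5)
Your proposal is correct and follows essentially the same argument as the paper: maintaining the invariant that the robber occupies the unique component of $G-\overline{N}(S)$ of size larger than $n/2$, using $S' \subseteq \overline{N}(S)$ to show both that the robber is not caught and that a path inside the old component is cop-free, and using the fact that two sets of size larger than $n/2$ must intersect. No gaps; the reasoning matches the paper's proof step for step.
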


\begin{proof}
Assume that there are $m$ cops in the game, and we give an escaping strategy for the robber.
The strategy has the following invariant: at the end of each round,
if the cops are positioned in a subset $S$ of vertices,
then the robber is at a vertex of the unique component of $G-\overline{N}(S)$ that has size larger than $n/2$.
Let $S_0$ be the subset of vertices that the cops occupy when the game starts.
By hypothesis, $G-\overline{N}(S_0)$ has a connected component $C_0$ of size larger than $n/2$,
and the robber starts at an arbitrary vertex of $C_0$.

Suppose that at the end of round $i$, the cops are in $S_i$,
and the robber is in a component $C_i$ of $G-\overline N (S_i)$ of size larger than $n/2$.
In round $i+1$, the cops move to a new set $S_{i+1} \subseteq \overline N(S_i)$, so the robber is not captured.
Let $C_{i+1}$ be the connected component of $G-\overline{N}(S_{i+1})$ that has size larger than $n/2$.
As both $C_i$ and $C_{i+1}$ have size larger than $n/2$, they intersect. Let $v \in C_i \cap C_{i+1}$.
Since $C_i$ is disjoint from $\overline{N}(S_i)$, at this moment there is no cop in $C_i$.
Moreover, $C_i$ is connected and the robber is in $C_i$, so she can move to $v$ in this round.
Hence at the end of round $i+1$, the robber is in $C_{i+1}$,
the connected component of $G-\overline{N}(S_{i+1})$ of size larger than $n/2$,
and the proof is complete.
\end{proof}

\begin{remark}
The idea in the proof was first used in~\cite{variations} to prove the existence of graphs with large cop number.
\end{remark}

Before proving the main result of this section, we need a technical lemma.
The proof is easy and we omit it.

\begin{lemma}
\label{lem:subset_sums}
Let $n,t$ be positive integers with $t \leq n$.
Let $a_1, a_2, \dots, a_m$ be positive integers such that each of them is at most $n/2$,
and their sum is $t$. Then we have the following.
\begin{itemize}
\item[(a)]
One can choose a subset of $\{a_1,\dots,a_m\}$ whose sum is between $t/3$ and $n/2$ (inclusive).
\item[(b)]
If $t\geq n/4$ then one can choose a subset of $\{a_1,\dots,a_m\}$ whose sum is between $n/4$ and $n/2$ (inclusive).
\end{itemize}

\end{lemma}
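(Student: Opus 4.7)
My plan is to sort the entries in decreasing order $a_1 \geq a_2 \geq \cdots \geq a_m$ and then take the shortest prefix whose sum reaches the required lower threshold; after that, I only need to control the overshoot past that threshold.

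For part (b) the target interval is $[n/4, n/2]$. If $a_1 \geq n/4$, then $\{a_1\}$ already works, since $a_1 \leq n/2$. Otherwise every $a_i < n/4$, so if $k$ is the smallest index with $s_k := a_1 + \cdots + a_k \geq n/4$, then
\[
s_k = s_{k-1} + a_k < \tfrac{n}{4} + \tfrac{n}{4} = \tfrac{n}{2},
\]
and $\{a_1, \ldots, a_k\}$ has sum in $[n/4, n/2)$.

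For part (a) I would run the same greedy with threshold $t/3$. The singleton case $a_1 \geq t/3$ is immediate (and $a_1 \leq n/2$). Otherwise all $a_i < t/3$, which already forces $m \geq 4$. Let $k \geq 2$ be minimal with $s_k \geq t/3$. When $k \geq 3$, because $a_k$ is the smallest of $a_1, \ldots, a_k$ one has $a_k \leq s_{k-1}/(k-1) < t/(3(k-1)) \leq t/6$, so
\[
s_k < \tfrac{t}{3} + \tfrac{t}{6} = \tfrac{t}{2} \leq \tfrac{n}{2},
\]
and the prefix itself is the desired subset. If $k = 2$ and $s_2 \leq n/2$, the prefix $\{a_1, a_2\}$ also works directly.

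The main obstacle is the remaining sub-case $k = 2$ with $s_2 > n/2$, which can arise when $t > 3n/4$: here the bound $a_k < t/3$ only gives $s_2 < 2t/3$, which can exceed $n/2$. To finish, I would pass to the complement $\{a_3, \ldots, a_m\}$, which is non-empty by $m \geq 4$. From $s_2 \in (n/2, 2t/3)$ I obtain
\[
t - s_2 \in \bigl(\tfrac{t}{3},\, t - \tfrac{n}{2}\bigr) \subseteq [t/3, n/2],
\]
using $t \leq n$ for the last inclusion, so the complement has sum in the required interval.
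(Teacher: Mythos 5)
Your proof is correct. The decreasing-order greedy prefix settles part (b) and most of part (a), and the one genuinely delicate sub-case ($k=2$ with $s_2>n/2$) is handled properly by passing to the complement: from $a_1,a_2<t/3$ you get $t-s_2>t/3$, and from $s_2>n/2$ together with $t\leq n$ you get $t-s_2<t-n/2\leq n/2$, so the complement (non-empty since $m\geq 4$, or simply because its sum is positive) lands in $[t/3,n/2]$. The paper states this lemma without proof (``The proof is easy and we omit it''), so there is no argument of record to compare against; your write-up is a complete and valid substitute.
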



Now we are ready to prove the main result of this section.

\begin{theorem}
\label{thm:isoperimetric_bounds}
For every graph $G$ we have
\begin{itemize}
\item[(a)]$\displaystyle\ c_{\infty} (G) \geq \frac{\iota_e n}{\Delta^2 - \Delta + \iota_e(\Delta+1)} \geq \frac{\iota_e n}{2\Delta^2}$,

\item[(b)]$\displaystyle\ c_{\infty} (G) \geq \frac{\iota_v n}{3\Delta + \iota_v(\Delta+1)}$,

\item[(c)]$\displaystyle\ c_{\infty} (G) \geq \frac{\iota_v n}{4\Delta}.$
\end{itemize}
\end{theorem}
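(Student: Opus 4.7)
The plan is to invoke Lemma~\ref{lem:large_components} three times, once for each part. In every case, I suppose towards a contradiction that some $S\subseteq V(G)$ has the property that no component of $G-\overline{N}(S)$ has more than $n/2$ vertices, and I aim to show that $|S|$ is at least the claimed expression. Set $W=\overline{N}(S)$, $U=V(G)\setminus W$, $t=|U|$, and let $C_1,\dots,C_k$ be the components of $G[U]$, each of size at most $n/2$. Two observations are used throughout: first, $|W|\leq(\Delta+1)|S|$, so $t\geq n-(\Delta+1)|S|$; and second, since $S$ has no neighbours in $U$, every edge from $U$ to $W$ has its $W$-endpoint in $N(S)\setminus S$, whose size is at most $\Delta|S|$, and each such endpoint has at most $\Delta-1$ neighbours in $U$.

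For part~(a) I would not use Lemma~\ref{lem:subset_sums}. Because each $|C_i|\leq n/2$, the edge-isoperimetric inequality gives $\iota_e|C_i|\leq|\partial C_i|$ for every $i$. Summing and using that $G[U]$ has no edges between distinct components, the right-hand side becomes exactly the number of edges from $U$ to $W$, which by the second observation above is at most $(\Delta-1)|N(S)\setminus S|\leq\Delta(\Delta-1)|S|$. Thus $\iota_e\bigl(n-(\Delta+1)|S|\bigr)\leq\iota_e t\leq\Delta(\Delta-1)|S|$, and rearranging yields $|S|\geq\iota_e n/(\Delta^2-\Delta+\iota_e(\Delta+1))$; the second inequality in~(a) then follows from $\iota_e\leq\Delta$ by a mechanical estimate.

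For parts~(b) and~(c) the summing trick loses a factor of $\Delta-1$, because a single vertex of $N(S)\setminus S$ can lie in $N(C_i)\setminus C_i$ for several values of $i$. I would instead pick one well-chosen union $T$ of components and apply the vertex-isoperimetric inequality once. A short argument---any neighbour of $T$ outside $T$ cannot lie in $U$, else it would be in the same component as its neighbour in $T$, nor in $S$, else $U$ would meet $N(S)$---gives $N(T)\setminus T\subseteq N(S)\setminus S$, and hence $|N(T)\setminus T|\leq\Delta|S|$. For~(b), Lemma~\ref{lem:subset_sums}(a) supplies such a $T$ with $|T|\geq t/3\geq(n-(\Delta+1)|S|)/3$, and combining with $\iota_v|T|\leq\Delta|S|$ rearranges to the bound. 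For~(c), when $t\geq n/4$ I would apply Lemma~\ref{lem:subset_sums}(b) to obtain $|T|\geq n/4$ and conclude directly; the remaining case $t<n/4$ forces $|S|>3n/(4(\Delta+1))$, which already exceeds $\iota_v n/(4\Delta)$ since $\iota_v\leq1$.

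The main technical point is resisting the temptation to apply Lemma~\ref{lem:subset_sums} uniformly: doing so in~(a) introduces a spurious factor of three in the denominator and loses the claimed $\Delta^2-\Delta+\iota_e(\Delta+1)$. The clean bound in~(a) hinges on the observation that the edge boundaries of distinct components of $G[U]$ are pairwise disjoint edge sets, so the sum $\sum_i|\partial C_i|$ carries no double counting---a feature that genuinely fails for vertex boundaries and is precisely what forces the Lemma~\ref{lem:subset_sums} detour in parts~(b) and~(c).
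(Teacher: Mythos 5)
Your proposal is correct and follows essentially the same route as the paper: extract via Lemma~\ref{lem:large_components} a set $S$ whose closed neighbourhood's removal leaves only small components, sum the disjoint edge boundaries for part~(a), and invoke Lemma~\ref{lem:subset_sums} with the containment $N(T)\setminus T\subseteq N(S)\setminus S$ for parts~(b) and~(c), including the same $|T|<n/4$ case split in~(c). The closing observation about why the subset-sums lemma is needed only for the vertex-isoperimetric bounds is a nice articulation of what the paper does implicitly, but the argument itself is the paper's.
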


\begin{proof}
Let $c=c_{\infty}(G)$.
By Lemma~\ref{lem:large_components} there exists a subset $S$ of at most $c$ vertices
such that $G-\overline{N}(S)$ has no component of size larger than $n/2$. We have
$$|\overline{N}(S)| \leq c(\Delta+1),\qquad |\overline {N}(S) \setminus S| \leq c\Delta, \quad\mathrm{and}\qquad
|\partial \overline{N}(S)| \leq c\Delta(\Delta-1),$$
where the last inequality holds since at most $c\Delta$ vertices of $\overline{N}(S)$ have a neighbour out of $\overline{N}(S)$,
and each has at most $\Delta-1$ such neighbours.
Let $T = V(G) \setminus \overline{N}(S)$, and let $A_1,A_2,\dots,A_m$ be the connected components of $G[T]$.
As $G[T]$ has no component of size larger than $n/2$, we have $|A_i| \leq n/2$ for all $i$.

(a) Since all of the $|A_i|$'s are at most $n/2$, for all $1\leq i \leq m$ we have $|\partial A_i| \geq \iota_e |A_i|$.
Thus
$$|\partial T| = \sum_{i=1}^{m} |\partial A_i| \geq \sum_{i=1}^{m} \iota_e |A_i| = \iota_e \sum_{i=1}^{m} |A_i| = \iota_e |T|.$$
This gives
\begin{align*}
c\Delta(\Delta-1) \geq  |\partial \overline{N}(S)| = |\partial T| \geq \iota_e |T| = \iota_e (n-|\overline{N}(S)|) \geq \iota_e(n-c(\Delta+1)).
\end{align*}
Part (a) now results by simplifying and noting that  $\iota_e \leq \Delta$.

(b)
By Lemma~\ref{lem:subset_sums} part (a), one can pick some components of $G[T]$
such that their union, $T'$, has size at least $|T|/3$ and at most $n/2$.
Then the set ${N}(T') \setminus T'$ has size at least $\iota_v |T'|$ and at most $|\overline N(S) \setminus S|$.
Thus
$$c\Delta \geq |\overline N(S) \setminus S| \geq \iota_v |T'| \geq \iota_v |T|/3 = \iota_v (n-|\overline{N}(S)|)/3 \geq \iota_v (n - c(\Delta+1)) / 3,$$
and part (b) follows after simplification.

(c)
If $|T| < n/4$, then we have $|\overline{N}(S)| > 3n/4$ so that $c(\Delta+1) > 3n/4$ and
$$c > \frac{3n}{4(\Delta + 1)} > \frac{\iota_v n}{4\Delta},$$
as $\iota_v \leq 1$ and $\Delta \geq 1$.

If $|T| \geq n/4$, then by Lemma~\ref{lem:subset_sums} part (b),
one can pick some components of $T$
such that their union has size at least $n/4$ and at most $n/2$.
Let $T'$ be their union.
Then the set ${N}(T') \setminus T'$ has size at least $\iota_v |T'|$ and at most $|\overline N(S) \setminus S|$, thus
$$c\Delta \geq |\overline{N}(S) \setminus S| \geq \iota_v |T'| \geq \iota_v n/4,$$
and part (c) follows.
\end{proof}

The existence of graph families with cop number $\Theta(n)$ has been proved by Frieze~et~al.~\cite{variations}.
However, their proof is nonconstructive.
A \emph{family of bounded-degree expanders} is a sequence $\{G_i\}_{i=1}^{\infty}$ of graphs,
where each $G_i$ has maximum degree $O(1)$ and vertex-isoperimetric number $\Omega(1)$.
Several constructions of families of bounded-degree expanders are known, see~\cite{expander_survey} for example.
Thus Theorem~\ref{thm:isoperimetric_bounds}, which 
shows that every family of bounded-degree expanders have cop number $\Theta(n)$,
enables one to construct graph families with cop number $\Theta(n)$.
This theorem also provides lower bounds for graphs with high expansion, for example random graphs (see Section~\ref{chp:random})
and Cartesian products of graphs (see Section~\ref{chp:product}).


\section[Random Graphs] {Bounds for Random Graphs}
\label{chp:random}

In this section we study $c_{\infty}(G)$ when $G$ is a random graph.
The original Cops and Robber game in random graphs has been studied by many authors, see for example~\cite{random_first,random_graphs_fixed_p,projective_plane_argument,zigzag}.
We denote an {E}rd\"{o}s-R\'{e}nyi random graph with parameters $n$ and $p$ by $\mathcal{G}(n,p)$.
All asymptotics throughout are as $n\rightarrow \infty$.
We say that an event in a probability space holds \emph{asymptotically almost surely (a.a.s.)}
if the probability that it holds approaches 1 as $n$ goes to infinity.
All logarithms in this section are in base $e \approx 2.718$.
Let $\gamma(G)$ denote the domination number of $G$.

The main results of this section are the following.

\begin{itemize}
\item
Assume that $np \geq 4.2 \log n$. Then there exist positive constants $\eta_1, \eta_2$ such that a random graph $G= \mathcal G(n,p)$ a.a.s.~has
$$ \frac{\eta_1}{p} \leq c_{\infty}(G) \leq \frac{\eta_2 \log (np)}{p}.$$
\item
Assume that $np = n^{\alpha + o(1)}$, where $1/2 < \alpha <1$. Then a.a.s.
$$c_{\infty}(G) = \Theta\left(\frac{\log n}{p}\right).$$
\item
If $np = n^{1-o(1)}$ and $p=1-\Omega(1)$, then a.a.s.
$$c_{\infty}(G) =(1+o(1)) \frac{\log n}{\log\frac{1}{1-p}}.$$
\item
Let $d \geq 3$ be fixed. Then a.a.s.~a randomly chosen labelled $d$-regular graph $G$ on $n$ vertices has
$$c_{\infty}(G) =\Theta(n).$$
\end{itemize}

We will use the following large deviation inequalities.
(See Corollary~A.1.10 and Theorem~A.1.13 in Appendix~A of~\cite{probabilistic_method}).

\begin{proposition}
\label{thm:chernoff}
Let $Y_1,\dots,Y_m$ be independent indicator random variables such that for all $i$, $\mathbb{E}[Y_i] = p = 1-q$.
Let $Y=Y_1+\dots+Y_m$ and $a>0$.
Then we have the following inequalities.
\begin{align*}
\mathbf{Pr}[Y - \mathbb{E}Y < -a] & <  \exp\left[a - (a+qm) \log\left(1+\frac{a}{qm}\right)\right]. \\
\mathbf{Pr}[Y - \mathbb{E}Y < -a]& <  \exp(-a^2 / 2pm).
\end{align*}
\end{proposition}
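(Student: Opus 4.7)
The plan is to establish both inequalities via the standard exponential-moment (Chernoff) technique, exploiting independence of the $Y_i$'s to factor the moment generating function of $Y$. Since the paper cites these as textbook bounds, the natural strategy is a routine optimization; I describe the route one would take when reproving from scratch.

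For the first inequality I would pivot to the complementary sum $Z = m - Y = \sum_{i=1}^m (1 - Y_i)$, which is itself a sum of independent Bernoullis with parameter $q$ and mean $qm$, and note that the event $\{Y - \mathbb{E}Y < -a\}$ coincides with $\{Z > qm + a\}$. For each $t > 0$, Markov's inequality applied to $e^{tZ}$ yields
\[
  \Pr[Z > qm+a] \;\le\; e^{-t(qm+a)}\, (p + q e^{t})^{m},
\]
and the elementary estimate $1 + q(e^{t} - 1) \le \exp(q(e^{t}-1))$ reduces this to $\exp(-t(qm+a) + qm(e^{t}-1))$. The exponent is minimized at $t^{\ast} = \log(1 + a/(qm))$; substituting and simplifying produces exactly $\exp[a - (a+qm)\log(1 + a/(qm))]$.

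For the second inequality I would apply the same machinery directly to $Y$: for $t > 0$,
\[
  \Pr[Y < pm - a] \;\le\; e^{t(pm-a)} (q + p e^{-t})^{m} \;\le\; \exp\!\bigl(t(pm-a) - pm(1 - e^{-t})\bigr).
\]
The optimizer $t^{\ast} = \log\!\bigl(pm/(pm-a)\bigr)$ turns the exponent into $-pm\bigl[(1-u)\log\tfrac{1}{1-u} - u\bigr]$ with $u = a/(pm)$. The elementary inequality $(1-u)\log(1-u) + u \ge u^{2}/2$ on $[0,1)$, which follows by observing that the difference vanishes and has nonnegative derivative at $u=0$ and nonnegative second derivative $u/(1-u)$ on $[0,1)$, converts the exact Chernoff form into the claimed sub-Gaussian bound $\exp(-a^{2}/(2pm))$.

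The main obstacle is purely algebraic: cleanly solving the first-order condition in $t$ and then applying the right Taylor-type inequality in the correct direction to go from the exact Cram\'er rate function to the cleaner quadratic form. Nothing here is conceptually deep; the care lies in tracking which side of each elementary inequality is an upper versus a lower estimate, and in verifying that $t^{\ast} > 0$ so that the Markov step is valid.
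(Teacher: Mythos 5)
The paper itself does not prove this proposition; it simply cites Corollary~A.1.10 and Theorem~A.1.13 of Alon and Spencer, whose proofs are precisely the exponential-moment argument you outline, so your route is the standard one and both derivations go through. One detail needs fixing: after substituting $t^{\ast}=\log\bigl(pm/(pm-a)\bigr)$, the exponent $t^{\ast}(pm-a)-pm(1-e^{-t^{\ast}})$ equals $pm\bigl[(1-u)\log\tfrac{1}{1-u}-u\bigr]=-pm\bigl[(1-u)\log(1-u)+u\bigr]$, which is the negative of the expression $-pm\bigl[(1-u)\log\tfrac{1}{1-u}-u\bigr]$ you wrote (as written, your expression is positive for $u\in(0,1)$ and could not yield a useful bound). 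With the sign corrected, the inequality $(1-u)\log(1-u)+u\ge u^{2}/2$ that you invoke --- and verify correctly via second derivatives --- does give $\exp(-a^{2}/2pm)$. You should also dispatch the degenerate case $a\ge pm$ separately, where $\Pr[Y<pm-a]=0$ because $Y\ge 0$, since your optimizer $t^{\ast}$ is only defined when $a<pm$; similarly the first bound implicitly assumes $q>0$.
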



Next we give a lower bound for vertex-isoperimetric number of random graphs,
which is of independent interest.
Such a bound does not seem to have appeared explicitly before.

\begin{theorem}
\label{thm:random_expansion}
Let $0<b<1$ be fixed.
Let $\beta = 1-b$ and let $t,k$ be constants such that
$$t > \frac{1 + \log 2}{\beta} - \log \beta, \qquad k > \frac{2t}{1-e^{-t}}.$$
If $np \geq k \log n$ then the random graph $G = \mathcal G(n,p)$ a.a.s.~has $\iota_v(G) \geq b$.
\end{theorem}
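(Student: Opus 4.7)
\emph{Approach.} I would prove the bound by a first-moment/union-bound argument. For a subset $S$ with $|S| = s$, observe that $Y_S := |N(S) \setminus S| \sim \mathrm{Bin}(n-s, 1-(1-p)^s)$ with mean $\mu_s = (n-s)(1-(1-p)^s)$, since each vertex outside $S$ joins $N(S)\setminus S$ iff at least one of its $s$ potential edges to $S$ is present. The goal is to show $\sum_{s=1}^{\lfloor n/2\rfloor} \binom{n}{s} \Pr[Y_S < bs] = o(1)$. I split this sum at the threshold $s_0 := t/p$.

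\emph{Regime I: $1 \leq s \leq s_0$.} In this range $ps \leq t$, and the concavity of $x \mapsto 1-e^{-x}$ on $[0,t]$ gives $1-(1-p)^s \geq 1-e^{-ps} \geq ps(1-e^{-t})/t$. Combined with $s \leq n/2$ and $np \geq k\log n$, this yields $\mu_s \geq sk(1-e^{-t})(\log n)/(2t)$. The hypothesis $k > 2t/(1-e^{-t})$ therefore forces $\mu_s > (1+\varepsilon) s \log n$ for some $\varepsilon > 0$, so $bs = o(\mu_s)$. Applying the sharp Poisson-form lower-tail Chernoff bound $\Pr[\mathrm{Bin} \leq x] \leq (e\mu/x)^x e^{-\mu}$ (which is tight when $x \ll \mu$) yields $\Pr[Y_S < bs] \leq n^{-(1+\varepsilon-o(1))s}$; union-bounding over $\binom{n}{s} \leq n^s$ choices of $S$ produces $\sum_s n^{-(\varepsilon-o(1))s} = o(1)$.

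\emph{Regime II: $s_0 < s \leq n/2$.} Here $(1-p)^s \leq e^{-t}$. The event $Y_S < bs$ forces at least $n-s-bs$ vertices outside $S$ to have no neighbor in $S$, so a direct union bound gives
\[
\Pr[Y_S < bs] \ \leq\ \binom{n-s}{bs}(1-p)^{s(n-s-bs)} \ \leq\ \binom{n-s}{bs}\, e^{-t(n-s-bs)}.
\]
Writing $\alpha = s/n$ and $\gamma = 1-(1+b)\alpha$, and using Stirling to bound $\binom{n}{s}\binom{n-s}{bs} \leq \exp(n\, H_3(\alpha, b\alpha, \gamma))$ (with $H_3$ the ternary entropy in natural logarithm), the total contribution becomes $\exp(n[H_3 - t\gamma])$. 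Since $\gamma \geq \beta/2$ for $s \leq n/2$, the hypothesis $t > (1+\log 2)/\beta - \log\beta$ is precisely what is needed to make $H_3(\alpha, b\alpha, \gamma) - t\gamma$ strictly negative throughout $\alpha \in (0,1/2]$ (the tightest constraint arising at the boundary $\alpha = 1/2$). Each summand is then $e^{-\Omega(n)}$, and the sum over at most $n/2$ values of $s$ is still $o(1)$.

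\emph{Main obstacle.} The delicate step is matching the stated constants. In Regime I, the Poisson form of the Chernoff bound is essential---the weaker form $\exp(-\delta^2\mu/2)$ would force the strictly stronger hypothesis $k > 4t/(1-e^{-t})$. In Regime II, one must verify that the maximum of $H_3(\alpha, b\alpha, 1-(1+b)\alpha) - t(1-(1+b)\alpha)$ over $\alpha \in (0, 1/2]$ is attained at $\alpha = 1/2$ and compute it explicitly; the resulting inequality reduces, after simplification, to the stated condition on $t$. Combining the two contributions yields $\Pr[\iota_v(G) < b] = o(1)$, as required.
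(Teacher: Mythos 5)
Your Regime I is sound: the concavity estimate $1-e^{-ps}\ge \frac{ps}{t}(1-e^{-t})$ for $ps\le t$ cleanly extracts $\mu_s>(1+\varepsilon)s\log n$ from the hypothesis on $k$, and the Poisson-form lower tail (whose exponent is decreasing in $\mu$ above $bs$, so the worst case is $\mu=(1+\varepsilon)s\log n$) then beats $\binom{n}{s}\le n^{s}$. This uses the same split point $s=t/p$ and the same role for the hypothesis on $k$ as the paper, which instead passes to $\mathcal G(n,1-e^{-p})$ and applies the bound $\exp(-a^2/2pm)$.

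The gap is in Regime II. Replacing the lower tail of $\mathrm{Bin}(n-s,1-(1-p)^s)$ by the first-moment bound $\binom{n-s}{bs}(1-p)^{s(n-s-bs)}$ and then estimating $(1-p)^{s}\le e^{-t}$ throws away too much. At $s=n/2$ your exponent is $n\bigl(\log 2+\tfrac12 H(b)-\tfrac12 t\beta\bigr)+o(n)$ with $H(b)=-b\log b-\beta\log\beta$, so you need $t\beta>2\log 2-b\log b-\beta\log\beta$, whereas the hypothesis only guarantees $t\beta>1+\log 2-\beta\log\beta$. The difference is $\log 2-b\log b-1$, which is positive whenever $-b\log b>1-\log 2\approx 0.307$, i.e.\ for all $b$ roughly in the interval $(0.18,0.60)$. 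So the claim that the stated condition on $t$ is ``precisely what is needed'' with the binding constraint at $\alpha=1/2$ is false, and your proof does not close for those $b$. (In the paper the condition on $t$ binds at the opposite end, $r=t/p$, where the crude bound $\binom{n}{r}\le 2^n$ must be beaten and the relevant quantity tends to $\beta t+\beta\log\beta-1>\log 2$.) The repair is straightforward: keep $(1-p)^{s(n-s-bs)}\le e^{-ps\gamma n}$ instead of degrading $ps$ to $t$. Since $np\ge k\log n$, for $s=\alpha n$ with $\alpha$ bounded away from $0$ one gets $ps\gamma n=np\,\alpha\gamma\,n=\omega(n)$, which swamps the ternary-entropy term (at most $n\log 3$); and for $\alpha=O(1/\log n)$ the entropy term is $o(n)$ while $ps\gamma\ge t\gamma=t-o(1)$, so each summand is $e^{-\Omega(n)}$ under the stated hypotheses. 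If you retain the lossy estimate, you must also actually verify, not assert, where the maximum of your exponent over $\alpha\in(0,1/2]$ is attained.
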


\begin{proof}
We show that the random graph $G = \mathcal{G}(n, 1-e^{-p})$ a.a.s.~has $\iota_v(G) \geq b$.
This proves the theorem, since $p \geq 1-e^{-p}$ and $\iota_v(G)$ does not decrease by adding edges to $G$.
Let $V(G) = \{v_1,\dots,v_n\}$.
For $1\leq r \leq n/2$, define
$$A^{(r)} = \{v_{n-r+1},\dots,v_n\},\quad X^{(r)} = |N(A^{(r)})|.$$
Note that $|A^{(r)}| =r$ and $X^{(r)} = X_1^{(r)} + \dots + X_{n-r}^{(r)}$,
where $X_i^{(r)}$ is the indicator random variable for $v_i \in N(A^{(r)})$.
For all $1\leq i\leq n-r$ we have
$$\mathbb{E}X_i^{(r)} = \mathbf{Pr} [v_i \in N(A^{(r)})] = 1 - e^{-pr}.$$
By symmetry (among the subsets of size $r$) and the union bound
it suffices to prove that
$$\sum_{r=1}^{n/2} \binom{n}{r} \mathbf{Pr} \left[ X^{(r)} < br \right] = o(1).$$
We split this sum into two parts: $1 \leq r < t/p$ and $t/p \leq r \leq  n/2$.

First, let $t/p \leq r \leq n/2$.
Let $m=n-r$, $Y_i = X_{i}^{(r)}$ and $a=(n-r)(1-e^{-pr}) - br$.
The first inequality in Proposition~\ref{thm:chernoff} gives
$$\mathbf{Pr} \left[ X^{(r)} < br \right] < \exp\left[ (n-r)(1-e^{-pr})-br - (n-r-br) \log \left(e^{pr} - \frac{bre^{pr}}{n-r} \right)\right].$$
Recall that $\beta = 1 - b$.
Then $1 - \frac{br}{n-r} \geq 1 - b = \beta$ so that
$$n-r-br \geq \beta (n-r), \quad e^{pr} - \frac{bre^{pr}}{n-r} \geq \beta e^{pr}$$
Thus, we have
\begin{align*}
\sum_{r= t/p }^{n/2} \binom{n}{r} \mathbf{Pr} \left[ X^{(r)} < br \right] &
\leq 2^n \exp\left[ (n-r)(1-e^{-pr})-br - (n-r-br) (pr+\log \beta) \right] \\
& \leq \exp\left[ n \log 2 + (n-r)\left(1 - \beta pr - \beta \log \beta \right) \right] \\
& = \left (\exp\left[\log 2 + \frac{n-r}{n}\left(1- \beta pr -\beta \log \beta \right) \right]\right)^n.
\end{align*}
To show the latter is $o(1)$,
we need to show that $f_1(r) = \frac{n-r}{n}\left(\beta pr + \beta \log \beta - 1 \right) > \log 2$ if $t/p \leq r \leq n/2$.
The function $f_1(r)$ is concave on $[t/p, n/2]$ and hence achieves its minimum at an endpoint of this interval.

If $r=t/p$, then
$$f_1(r) = \left(1-\frac{t}{np}\right)\left(\beta t + \beta \log \beta - 1 \right).$$
Since $t$ was chosen so that $\beta (t + \log \beta) - 1 > \log 2$,
 and $np = \omega(1)$, $f_1(r) > \log 2$ for $n$ large enough.
If $r=n/2$, then
$$f_1(r) = \frac{1}{2} \left(\beta pn/2 + \beta\log \beta - 1 \right) = \omega(1).$$


Now we handle the second part, $1\leq r < t/p$.
Let $m=n-r$, $Y_i = X_{i}^{(r)}$ and $a=(n-r)(1-e^{-pr}) - br$.
The second inequality in Proposition~\ref{thm:chernoff} gives
$$\mathbf{Pr} \left[ X^{(r)} < br \right] < \exp\left[ -\frac{\left((n-r)(1-e^{-pr})-br\right)^2}{2(n-r)(1-e^{-pr})} \right]
< \exp\left[br - \frac{(n-r)(1-e^{-pr})}{2} \right].$$
For any fixed $r$, $1\leq r < t/p$, we have
\begin{align*}
\binom{n}{r} \mathbf{Pr} \left[ X^{(r)} < br \right] & < \exp\left[r\log n + br - \frac{(n-r)(1-e^{-pr})}{2} \right].
\end{align*}
Therefore, to show that
$$\sum_{r=1}^{t/p} \binom{n}{r} \mathbf{Pr} \left[ X^{(r)} < br \right] = o(1),$$
it is enough to show that $\frac{(n-r)(1-e^{-pr})}{2} - r\log n - br = \Omega(n)$ if $1\leq r \leq t/p$.
The function $f_2(r) = \frac{(n-r)(1-e^{-pr})}{2} - r\log n - br$ is concave,
and achieves its minimum at its endpoints.

When $r=1$,
$$f_2(r) = (n-1)(1-e^{-p})/2 - \log n  -b = \Omega(n).$$
When $r=\frac{tn}{k \log n} \geq t/p$,
\begin{align*}
f_2(r) =
\frac{(n-r)(1-e^{-pr})}{2} - r\log n - br & \leq \frac{(n-\frac{tn}{k\log n})(1-e^{-t})}{2} - \frac{tn}{k\log n} (b + \log n) \\
& = n \left[\frac{(1-\frac{t}{k\log n})(1-e^{-t})}{2} - \frac{t}{k} - \frac{bt}{k\log n} \right],
\end{align*}
which is $\Omega(n)$ as $k$ was chosen such that
$$\frac{1-e^{-t}}{2} - \frac{t}{k} > 0.\qedhere$$
\end{proof}

For upper bounds, we will use some known bounds on the domination number $\gamma(G)$ of random graphs.
The following theorem has been proved in page 4 of the book by Alon and Spencer~\cite{probabilistic_method}.

\begin{theorem}[\cite{probabilistic_method}]
\label{thm:domination_upper_bound_original}
Every graph $G$ has
$$\gamma(G) \leq n\frac{1+\log(\delta+1)}{\delta+1}.$$
\end{theorem}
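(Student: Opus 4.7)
The plan is to use the probabilistic method, specifically the alterations technique, which is the standard approach for bounds of this shape. Heuristically, picking each vertex independently with probability $p$ gives a set of expected size $np$ that misses a typical vertex with probability about $(1-p)^{\delta+1}$; we can then repair the set by adding the undominated vertices, and optimize over $p$.

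More concretely, I would proceed as follows. First, fix a parameter $p \in [0,1]$ to be chosen at the end. Form a random set $X \subseteq V(G)$ by including each vertex independently with probability $p$. Let $Y$ be the set of vertices of $G$ that are not in $\overline{N}(X)$, i.e.\ vertices $v$ such that neither $v$ nor any neighbour of $v$ lies in $X$. Then by construction $X \cup Y$ is a dominating set of $G$, since every vertex is either in $X \cup Y$ directly or has a neighbour in $X$.

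Next, I would estimate $\mathbb{E}[|X \cup Y|] \leq \mathbb{E}|X| + \mathbb{E}|Y|$. Clearly $\mathbb{E}|X| = np$. For each vertex $v$, the event $v \in Y$ requires that none of the at least $\delta+1$ vertices in $\overline{N}(v)$ belong to $X$, so $\Pr[v \in Y] \leq (1-p)^{\delta+1} \leq e^{-p(\delta+1)}$. Summing and applying linearity gives
\[
\mathbb{E}|X \cup Y| \leq np + n e^{-p(\delta+1)}.
\]
Since this expectation is achieved by some outcome, there exists a dominating set of size at most the right-hand side.

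Finally, I would optimize the bound over $p$. Differentiating shows the right-hand side is minimized at $p = \log(\delta+1)/(\delta+1)$ (which lies in $[0,1]$ since $\delta \geq 0$); substituting this value yields the claimed estimate $\gamma(G) \leq n(1+\log(\delta+1))/(\delta+1)$. No step is really the main obstacle here — the proof is essentially a one-paragraph probabilistic argument — but the one place to be careful is the inequality $\Pr[v \in Y] \leq (1-p)^{\delta+1}$, which uses that $|\overline{N}(v)| \geq \delta+1$ and that the independence of vertex choices makes the events $\{u \notin X\}$ for $u \in \overline{N}(v)$ mutually independent.
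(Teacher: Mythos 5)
Your proposal is correct and is essentially the standard alteration argument from Alon and Spencer that the paper cites for this theorem (the paper gives no proof of its own, only the reference). The random-set-plus-repair construction, the bound $\Pr[v\in Y]\leq(1-p)^{\delta+1}\leq e^{-p(\delta+1)}$, and the choice $p=\log(\delta+1)/(\delta+1)$ all match the source's proof exactly.
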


\begin{corollary}
\label{thm:domination_upper_bound_a}
If $np > 2\log n$ then a random graph $G=\mathcal{G}(n,p)$ a.a.s.~has
$$ \gamma(G) = O\left( \frac{n\log \delta}{\delta}\right)=O\left(\frac{\log (np)}{p}\right).$$
\end{corollary}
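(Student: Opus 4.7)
The plan is to combine Theorem~\ref{thm:domination_upper_bound_original} with an a.a.s.~lower bound on the minimum degree $\delta$ of $G \sim \mathcal{G}(n,p)$. Concretely, I would show that the hypothesis $np > 2\log n$ already forces $\delta(G) = \Omega(np)$ a.a.s., and then plug this into the deterministic bound of Theorem~\ref{thm:domination_upper_bound_original}.

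For the minimum degree step, note that $\deg(v) \sim \mathrm{Bin}(n-1,p)$ with mean $\mu = (n-1)p$ for each fixed vertex $v$. A Chernoff-type large-deviation inequality gives
$$
\mathbf{Pr}[\deg(v) \leq \alpha\, np] \;\leq\; \exp\bigl(-np \cdot h(\alpha)\bigr), \qquad h(\alpha) := 1 - \alpha + \alpha \log \alpha,
$$
valid for any fixed $\alpha \in (0,1)$. Since $h(\alpha) \to 1$ as $\alpha \to 0^+$, one can pick a small enough constant $\alpha$ (for instance $\alpha = 1/10$, for which $h(\alpha) > 1/2$) so that $np \cdot h(\alpha) > \log n$ whenever $np > 2\log n$. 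Then $\mathbf{Pr}[\deg(v) \leq \alpha np] = o(1/n)$, and a union bound over the $n$ vertices yields $\delta(G) \geq \alpha np$ a.a.s.

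Granted this, Theorem~\ref{thm:domination_upper_bound_original} gives the deterministic inequality
$$
\gamma(G) \;\leq\; n \, \frac{1 + \log(\delta+1)}{\delta+1} \;=\; O\!\left(\frac{n \log \delta}{\delta}\right),
$$
and substituting $\delta = \Omega(np)$ reduces the right-hand side to $O(n \log(np)/(np)) = O(\log(np)/p)$, which is the claimed bound. The two $O$'s in the corollary thus correspond directly to Theorem~\ref{thm:domination_upper_bound_original} and to the a.a.s.~estimate $\delta = \Theta(np)$, respectively.

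The one delicate step is the Chernoff estimate: the subgaussian inequality stated in Proposition~\ref{thm:chernoff} only yields $\exp(-(1-\alpha)^2 np/2)$, whose exponent is at most $np/2 \leq \log n$ under the hypothesis and so is too weak to survive the union bound when $np$ is only marginally above $2\log n$. The sharper relative-entropy form above, obtained by optimizing $\mathbb{E}[e^{-t\deg(v)}]$ over $t>0$, supplies the extra room; it is a routine derivation from the moment generating function of $\mathrm{Bin}(n-1,p)$, but it is the crucial technical ingredient that makes the coefficient ``$2$'' in $np > 2\log n$ enough.
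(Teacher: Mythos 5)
Your proposal is correct and follows exactly the paper's route: the paper's entire proof is the one-line assertion that for $np>2\log n$ the minimum degree is a.a.s.\ $\Theta(np)$, which is then fed into Theorem~\ref{thm:domination_upper_bound_original}. You simply supply the Chernoff computation (correctly noting that the relative-entropy form, not the subgaussian one, is needed near the threshold $np=2\log n$) that the paper leaves implicit.
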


\begin{proof}
For $np > 2\log n$, a.a.s.~$\delta$ is $\Theta(np)$.
\end{proof}

The following theorem has been proved
by Bonato, Pra{\l}at, and Wang~\cite{random_graphs_fixed_p} when $p=o(1)$, and by
Wieland and Godbole~\cite{random_graphs_domination} when $p=\Omega(1)$.

\begin{theorem}[\cite{random_graphs_fixed_p,random_graphs_domination}]
\label{thm:domination_upper_bound_b}
If $p = 1 - \Omega(1)$, then
a random graph $G=\mathcal{G}(n,p)$ a.a.s~has
$$\gamma(G) \leq (1+o(1)) \frac{\log n}{\log\frac{1}{1-p}}.$$
\end{theorem}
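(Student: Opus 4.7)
The plan is a first-moment argument: pick a uniformly random $k$-subset $S \subseteq V(G)$, independently of $G$, of size
\[ k = \left\lceil \frac{(1+\epsilon) \log n}{\log\frac{1}{1-p}} \right\rceil \]
for an arbitrary fixed $\epsilon > 0$, and show that $S$ is a dominating set of $G$ with probability $1 - o(1)$ in the joint distribution of $(G, S)$. Once this is established for every fixed $\epsilon > 0$, the stated $(1+o(1))$ bound follows by letting $\epsilon = \epsilon_n$ tend to zero slowly. The hypothesis $p = 1-\Omega(1)$ ensures $\log\frac{1}{1-p} = \Omega(1)$, so $k = O(\log n)$ is well-defined and the estimates below behave as expected.

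In more detail: fix a vertex $v$. If $v$ is not dominated by $S$ then in particular $v \notin S$, and conditionally on $v \notin S$, each of the $k$ potential edges from $v$ to $S$ is absent independently with probability $1-p$. Hence
\[ \mathbf{Pr}[v \text{ is not dominated by } S] \leq (1-p)^k \leq \exp\left(-(1+\epsilon) \log n\right) = n^{-(1+\epsilon)}. \]
By linearity of expectation, the expected number of undominated vertices is at most $n^{-\epsilon}$, so Markov's inequality gives
\[ \mathbf{Pr}[S \text{ is not a dominating set of } G] \leq n^{-\epsilon} = o(1). \]
Averaging over $G$ shows that a.a.s.~$G$ admits a dominating set of size $k$, i.e., $\gamma(G) \leq k$. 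Taking a slowly decaying sequence $\epsilon_n \to 0$ with $n^{-\epsilon_n} = o(1)$ (say $\epsilon_n = 1/\log\log n$) then yields $\gamma(G) \leq (1+o(1)) \log n / \log\frac{1}{1-p}$ a.a.s.

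Honestly, there is no real obstacle: the whole proof reduces to the observation that once $k$ slightly exceeds $\log n / \log\frac{1}{1-p}$, a single uniformly random $k$-subset already dominates in expectation, and a first-moment bound via Markov closes the argument. The only mild subtlety is to let $\epsilon$ shrink with $n$ in order to collapse the constant factor into $(1+o(1))$, which is routine. Incidentally, the matching lower bound $\gamma(G) \geq (1-o(1)) \log n / \log\frac{1}{1-p}$ (not needed for this statement) is the more delicate direction and is what typically requires a union bound over all $k$-subsets or a second-moment computation in the cited references.
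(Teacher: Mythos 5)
The paper does not actually prove Theorem~\ref{thm:domination_upper_bound_b}; it imports it from the cited references of Bonato--Pra{\l}at--Wang and Wieland--Godbole, so there is no internal proof to compare against. Your first-moment argument is a correct, self-contained proof of the stated upper bound, and it is essentially the standard argument from those sources (which, as you note, also establish the harder matching lower bound). Two remarks. First, the randomization of $S$ buys you nothing: since $S$ is chosen independently of $G$, the edges from a fixed vertex $v \notin S$ to a \emph{fixed} $k$-set $S$ are already independent Bernoulli$(p)$, so you may simply take $S$ to be any deterministic $k$-subset and union-bound over the $n-k$ remaining vertices to get failure probability at most $n(1-p)^k \leq n^{-\epsilon}$; the Markov/averaging step is then unnecessary. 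Second, your justification of well-definedness is backwards: $p = 1-\Omega(1)$ means $1-p = \Omega(1)$, which gives $\log\frac{1}{1-p} = O(1)$, \emph{not} $\Omega(1)$, so $k$ need not be $O(\log n)$ --- for small $p$ (say $p \leq \log n / n$) your $k$ can exceed $n$ and no $k$-subset exists. This does not break the proof: what the hypothesis actually buys is that $\log n / \log\frac{1}{1-p} = \Omega(\log n) \to \infty$, so the ceiling and the slowly decaying $\epsilon_n$ are absorbed into the $(1+o(1))$ factor; and in the degenerate regime where $k > n$ the claimed bound is trivially true because $\gamma(G) \leq n$. With those two points repaired, the argument is sound.
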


For a graph $G$, let $c_1(G)$ be the minimum number of cops that
can capture the robber in $G$, in the original Cops and Robber game
(in which the robber can move only to an adjacent vertex in her turn).
Then we have
$$c_1(G) \leq c_{\infty}(G) \leq \gamma(G).$$
The lower bound is obvious.
The upper bound is easy: if the cops start by occupying a dominating set,
they will capture the robber in the first round.

We are ready to prove the main theorem of this section,
which provides bounds for  cop numbers of the random graph $\mathcal G(n,p)$
for various ranges of $p$.

\begin{theorem}
\label{thm:random_bounds}
Let $G=\mathcal G(n,p)$. Then we have the following.
\begin{enumerate}
\item[(a)]
If $np \geq 4.2 \log n$, then a.a.s.
\begin{align*}
c_{\infty}(G) & = \Omega\left(\frac{n}{\Delta}\right) = \Omega\left(\frac{1}{p}\right),\ \mathrm{and} \\
c_{\infty}(G) & = O\left(\frac{n \log \delta}{\delta}\right) = O\left(\frac{\log (np)}{p}\right).
\end{align*}
\item[(b)]
If $np = n^{\alpha+o(1)}$ with $\frac{1}{2} < \alpha < 1$, then a.a.s.
$$c_{\infty}(G) = \Theta\left(\frac{\log n}{p}\right) = n^{1-\alpha + o(1)}.$$
\item[(c)]
If $np = n^{1-o(1)}$ and $p=1-\Omega(1)$, then a.a.s.
$$c_{\infty}(G) = (1+o(1)) \frac{\log n}{\log\frac{1}{1-p}}.$$
\end{enumerate}
\end{theorem}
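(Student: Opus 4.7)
The plan is to treat the three parts in sequence, with part (a) acting as the workhorse and parts (b) and (c) reducing to it together with the domination-number estimates already in hand.

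For part (a), the lower bound starts by producing a constant $b>0$ with $\iota_v(G)\geq b$ a.a.s.\ via Theorem~\ref{thm:random_expansion}. The constant $4.2$ is calibrated precisely for this: I would take $b$ small enough that $\beta=1-b$ is close to $1$, so that the threshold $(1+\log 2)/\beta-\log\beta$ which $t$ must exceed tends to $1+\log 2\approx 1.693$, where $2t/(1-e^{-t})$ is approximately $4.15$, leaving room under $4.2$. A routine Chernoff bound on vertex degrees gives $\Delta(G)=(1+o(1))np$ a.a.s., and then Theorem~\ref{thm:isoperimetric_bounds}(c) delivers $c_{\infty}(G)\geq \iota_v n/(4\Delta)=\Omega(1/p)$. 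The upper bound is essentially free: $c_{\infty}(G)\leq \gamma(G)=O(\log(np)/p)$ by Corollary~\ref{thm:domination_upper_bound_a}.

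For part (b), the hypothesis $\alpha>1/2$ forces $np\geq 4.2\log n$ for large $n$, so part (a) applies verbatim. Because $\log n=n^{o(1)}$, both $1/p$ and $\log(np)/p$ equal $n^{1-\alpha+o(1)}$, and the two bounds from (a) therefore sandwich $c_{\infty}(G)$ at $n^{1-\alpha+o(1)}$, which is the content of the statement.

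For part (c), the upper bound $c_{\infty}(G)\leq \gamma(G)\leq (1+o(1))\log n/\log(1/(1-p))$ is immediate from Theorem~\ref{thm:domination_upper_bound_b}. The matching lower bound is the real difficulty: in this regime $1/p=n^{o(1)}$ and $\log(1/(1-p))=\Theta(1)$, so the isoperimetric lower bound of part (a) is only $\Omega(1)$, whereas the target is $\Theta(\log n)$. The expander machinery of Section~\ref{chp:expander} therefore cannot produce the required logarithmic bound on its own. I would instead use the trivial inequality $c_{\infty}(G)\geq c_1(G)$ and cite the asymptotic formula $c_1(\mathcal{G}(n,p))=(1+o(1))\log n/\log(1/(1-p))$ established for dense random graphs in \cite{random_graphs_fixed_p}. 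Thus the main obstacle of the theorem lies in part (c), where one must step outside this paper's own framework to import a prior result on the classical cop number.
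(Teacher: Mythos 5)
Your treatment of parts (a) and (c) matches the paper's: for (a) the paper takes $b=0.001$, $t=1.7$, $k=4.2$ in Theorem~\ref{thm:random_expansion}, applies Theorem~\ref{thm:isoperimetric_bounds}(c), and uses Corollary~\ref{thm:domination_upper_bound_a} for the upper bound, exactly as you describe (one small caution: when $np=\Theta(\log n)$ the maximum degree is $\Theta(np)$ but not necessarily $(1+o(1))np$; only the $O(np)$ bound is needed, so this does not hurt you). For (c) the paper likewise imports $c_1(G)=(1+o(1))\log n/\log\frac{1}{1-p}$ from Bonato, Pra{\l}at, and Wang and sandwiches via $c_1(G)\leq c_{\infty}(G)\leq\gamma(G)$.

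The genuine gap is in part (b). The claim there is $c_{\infty}(G)=\Theta\left(\frac{\log n}{p}\right)$, not merely $c_{\infty}(G)=n^{1-\alpha+o(1)}$; the second equality in the statement is just a reformulation of the first, and the $\Theta$ is the actual content. Your proposed route sandwiches $c_{\infty}(G)$ between the part~(a) bounds $\Omega(1/p)$ and $O(\log(np)/p)$, which pins down the exponent but leaves a multiplicative gap of order $\log n$: the lower bound $\Omega(1/p)$ is not $\Omega(\log n/p)$. The expander machinery cannot close this gap here any more than it can in part (c), and the paper's fix is the same move you already deploy for (c): use $c_{\infty}(G)\geq c_1(G)$ together with the Bonato--Pra{\l}at--Wang result that $c_1(\mathcal G(n,p))=\Theta\left(\frac{\log n}{p}\right)$ a.a.s.\ when $np=n^{\alpha+o(1)}$ with $\frac12<\alpha<1$, and pair it with the upper bound $\gamma(G)=O(\log n/p)$ from Corollary~\ref{thm:domination_upper_bound_a}. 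So the ``step outside the paper's framework'' that you correctly identify as necessary for (c) is in fact also necessary for (b).
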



\begin{proof}
\begin{enumerate}
\item[(a)]
Let $b=0.001, t=1.7$, and $k = 4.2$.
It follows from Theorem~\ref{thm:random_expansion}
that if $pn \geq k \log n$ then $G$ a.a.s.~has $\iota_v (G) \geq b$,
and the lower bound follows from part~(c) of Theorem~\ref{thm:isoperimetric_bounds},
and noting that in this range we have $\Delta = \Theta(np)$.
The upper bound follows from Corollary~\ref{thm:domination_upper_bound_a}.

%

\item[(b)]
Bonato, Pra{\l}at, and Wang~\cite{random_graphs_fixed_p} proved that
if $np = n^{\alpha + o(1)}$, where $1/2 < \alpha < 1$,
then a.a.s.~in the original Cops and Robber game played in $G= \mathcal{G}(n,p)$,
$$c_1(G) = \Theta\left(\frac{\log n}{p}\right) = n^{1-\alpha + o(1)}.$$
On the other hand, by Corollary~\ref{thm:domination_upper_bound_a}, $\gamma(G)$ is a.a.s~at most
$$ O\left(\frac{\log (np)}{p}\right) = O\left(\frac{\log n}{p}\right)= n^{1-\alpha + o(1)}.$$
The result follows since $c_1(G) \leq c_{\infty}(G) \leq \gamma(G)$.

\item[(c)]
Bonato~et~al.~\cite{random_graphs_fixed_p} proved that
if $np=n^{1-o(1)}$  and $p=1-\Omega(1)$, then a.a.s.~in the original Cops and Robber game played in $G= \mathcal{G}(n,p)$,
$$c_1(G) = (1+o(1)) \frac{\log n}{\log \frac{1}{1-p}}.$$
On the other hand, by Theorem~\ref{thm:domination_upper_bound_b}, $\gamma(G)$ is a.a.s.~at most
$$ (1+o(1))  \frac{\log n}{\log \frac{1}{1-p}}.$$
The result follows since $c_1(G) \leq c_{\infty}(G) \leq \gamma(G)$.
\end{enumerate}
\end{proof}

Finally, we give bounds for $c_{\infty}$ of random regular graphs,
using the following theorem for their edge-isoperimetric number,
proved by Bollob\'{a}s~\cite{random_regular_isoperimetric}.

\begin{theorem}[\cite{random_regular_isoperimetric}]
Let $d\geq 3$ be fixed. Then
a.a.s.~a randomly chosen $d$-regular labelled graph $G$ on $n$ vertices has
$$\iota_e(G) \geq d/2 - \sqrt{d\log 2} - o(1).$$
\end{theorem}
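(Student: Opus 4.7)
The plan is to work in the configuration (pairing) model of Bollob\'as: assign $d$ half-edges to each of the $n$ vertices and pair the $dn$ resulting half-edges uniformly at random among all $(dn-1)!!$ perfect matchings. The induced multigraph is simple with probability tending to the positive constant $e^{-(d^2-1)/4}$, and conditional on simplicity is distributed uniformly over labelled $d$-regular graphs on $[n]$. It therefore suffices to prove the isoperimetric bound a.a.s.~in the configuration model, since any event of probability $1-o(1)$ there has probability $1-o(1)$ under the uniform model.

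Fix $\varepsilon>0$ and set $c = d/2 - \sqrt{d\log 2} - \varepsilon$. For a subset $S\subseteq[n]$ of size $s\leq n/2$, if $2k$ of the $ds$ half-edges incident to $S$ are paired internally then $|\partial S| = ds - 2k$, and a direct count of perfect matchings gives
$$\Pr[|\partial S|\leq cs] \;=\; \sum_{k\geq (ds-cs)/2}\frac{\binom{ds}{2k}(2k-1)!!\,\binom{d(n-s)}{ds-2k}(ds-2k)!\,\bigl(d(n-s)-ds+2k-1\bigr)!!}{(dn-1)!!}.$$
I would then apply a union bound over subsets and sizes, and reduce the theorem to showing
$$\sum_{s=1}^{\lfloor n/2\rfloor}\binom{n}{s}\Pr[|\partial S|\leq cs] \;=\; o(1).$$

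To carry this out, write $s=\alpha n$ and apply Stirling's formula to every factorial and double factorial above. After optimizing over the internal parameter $k$ for fixed $\alpha$ and $c$, the logarithm of the summand reduces to $n\,\Psi(\alpha,c,d) + o(n)$ for an explicit function $\Psi$ encoding both the entropy $H(\alpha) = -\alpha\log\alpha-(1-\alpha)\log(1-\alpha)\leq \log 2$ from the choice of $S$ and a large-deviation rate from the pairing. The threshold $\sqrt{d\log 2}$ emerges precisely by balancing these two contributions: for $c$ slightly below $d/2$ the Gaussian-like cost of seeing fewer than $cs$ boundary edges is approximately $(d/2-c)^{2}/(d/2)$ per vertex of $S$, and forcing this to dominate the entropy $H(\alpha)$ at the worst case $\alpha\to 1/2$ is exactly the condition $c < d/2 - \sqrt{d\log 2}$.

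The main obstacle is organizing this asymptotic analysis uniformly across the full range of $s$. The tight regime is $s = \Theta(n)$ with $\alpha$ near $1/2$, where entropy and large-deviation cost nearly balance and one must extract a negative $\Omega(n)$ exponent with care. A clean way to avoid difficulties is to split the sum by scale: for very small $s$ (say $s\leq \log n$) trivial concentration shows $|\partial S|$ is within $O(\sqrt{s})$ of $ds$ so the event is essentially impossible; for intermediate $s$ one uses a crude Stirling estimate together with the strict inequality $H(\alpha)<\log 2$; and only in the regime $s \asymp n$ does the full Gaussian balancing calculation need to be executed. Together these three regimes give the desired $o(1)$ bound and thus $\iota_e(G)\geq d/2 - \sqrt{d\log 2} - o(1)$ a.a.s.
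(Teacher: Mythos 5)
This theorem is not proved in the paper at all: it is quoted verbatim from Bollob\'as's article on the isoperimetric number of random regular graphs, so there is no in-paper argument to compare against. Your outline is, in essence, Bollob\'as's original proof: pass to the configuration model, write the exact pairing count for the event $|\partial S|\leq cs$, take a union bound over all sets and sizes, and observe that the threshold $d/2-\sqrt{d\log 2}$ arises from balancing the entropy $\log 2$ per vertex at $|S|=n/2$ against the large-deviation cost of a deficient boundary; your heuristic cost $(d/2-c)^{2}/(d/2)$ per vertex of $S$ does reproduce the correct constant. The one place where your sketch is genuinely loose is the phrase ``Gaussian-like cost'': for fixed small $d$ the required relative deviation $\eta=1-2c/d$ is close to $1$ (e.g.\ $\eta\approx 0.96$ for $d=3$), so a Gaussian approximation is not literally valid there; what saves the argument is that the exact rate function, which in Bollob\'as's form enters as the condition $2^{4/d}<(1-\eta)^{1-\eta}(1+\eta)^{1+\eta}$, dominates its quadratic approximation $e^{\eta^{2}}$ for all $\eta\in[0,1]$, so the quadratic bound is a valid \emph{lower} bound on the exponent and still yields $c<d/2-\sqrt{d\log 2}$. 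If you make that one inequality explicit, and carry out the (tedious but standard) uniform Stirling analysis over $k$ and $\alpha$, your plan becomes the cited proof.
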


\begin{corollary}
Let $d\geq 3$ be fixed. Then
a.a.s.~a randomly chosen $d$-regular labelled graph $G$ on $n$ vertices has
$$\frac{d - 2\sqrt{d\log 2}}{4d^2}\,n - o(n) \leq c_{\infty}(G) \leq \gamma(G) \leq \frac{1+\log(d+1)}{d+1}\,n.$$
\end{corollary}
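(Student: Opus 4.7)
The plan is to assemble the bound from three earlier ingredients: Theorem~\ref{thm:isoperimetric_bounds}(a) converts edge-isoperimetric information into a lower bound on $c_\infty$, the Bollob\'as bound just quoted supplies the necessary a.a.s.~lower bound on $\iota_e(G)$, and the domination-based upper bound $c_\infty(G)\leq\gamma(G)$ together with Theorem~\ref{thm:domination_upper_bound_original} supplies the upper bound. The key observation that makes the arithmetic clean is that for a $d$-regular graph we have $\Delta = \delta = d$, so the $\Delta^2$ appearing in Theorem~\ref{thm:isoperimetric_bounds}(a) becomes $d^2$ and the $\delta+1$ in Theorem~\ref{thm:domination_upper_bound_original} becomes $d+1$.

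For the lower bound, first I would specialize Theorem~\ref{thm:isoperimetric_bounds}(a) to a $d$-regular $G$, obtaining $c_\infty(G) \geq \iota_e(G)\,n/(2d^2)$. Then I would plug in the a.a.s.~estimate $\iota_e(G) \geq d/2 - \sqrt{d\log 2} - o(1)$ provided by the previous theorem. Distributing the factor $n/(2d^2)$ produces exactly $(d - 2\sqrt{d\log 2})\,n/(4d^2)$ as the main term; the $o(1)$ contribution becomes $o(n/d^2) = o(n)$ since $d$ is fixed, and this is absorbed into the $o(n)$ in the statement.

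For the upper bound, I would invoke the remark recorded just before Theorem~\ref{thm:random_bounds} that $c_\infty(G) \leq \gamma(G)$, which holds because cops occupying a dominating set capture the robber in the first round. Applying Theorem~\ref{thm:domination_upper_bound_original} with $\delta = d$ then yields $\gamma(G) \leq n(1+\log(d+1))/(d+1)$, a deterministic bound that in particular holds a.a.s.

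There is essentially no obstacle: the corollary is a direct substitution into earlier results. The only point requiring a moment's care is that the $o(1)$ in the isoperimetric bound must be tracked through multiplication by $n/(2d^2)$, where (with $d$ fixed) it absorbs into the $o(n)$ error term. Every other step is an immediate consequence of a cited theorem.
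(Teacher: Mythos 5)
Your proof is correct and follows exactly the paper's own route: the lower bound comes from specializing Theorem~\ref{thm:isoperimetric_bounds}(a) with $\Delta=d$ and substituting Bollob\'as's a.a.s.~bound on $\iota_e(G)$, and the upper bound comes from $c_{\infty}(G)\leq\gamma(G)$ combined with Theorem~\ref{thm:domination_upper_bound_original} with $\delta=d$. The arithmetic, including absorbing the $o(1)$ term into $o(n)$ for fixed $d$, checks out.
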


\begin{proof}
The lower bound follows from the above bound for $\iota_e(G)$
and part (a) of Theorem~\ref{thm:isoperimetric_bounds}.
The upper bound for $\gamma(G)$ follows from Theorem~\ref{thm:domination_upper_bound_original}.
\end{proof}


\section[Cartesian Products] {Bounds for Cartesian Products of Graphs}
\label{chp:product}

Let $G_1,G_2,\dots,G_m$ be graphs.
Define $G$ to be the graph with vertex set $V(G_1)\times V(G_2)\times \dots \times V(G_m)$
with vertices $(u_1,u_2,\dots,u_m)$ and $(v_1,v_2,\dots,v_m)$ being adjacent if there exists an index $1\leq j \leq m$ such that
\begin{itemize}
\item $u_i = v_i$ for all $i\neq j$, and
\item $u_j$ and $v_j$ are adjacent in $G_j$.
\end{itemize}
Then $G$ is called the \emph{Cartesian product} of $G_1,G_2,\dots,G_m$.

Neufeld and Nowakowski~\cite{speed_one_products} have studied the original Cops and Robber game
played on products of graphs.
They have determined exactly the number of cops needed to capture the robber,
when $G$ is the Cartesian product of complete graphs,
and when $G$ is the Cartesian product of an arbitrary number of trees and cycles.
In this section we study $c_{\infty}(G)$ when $G$ is a Cartesian product of graphs.

%

The following theorem is the main result of this section.

\begin{theorem}
\label{thm:bounds_products}
Let $G_1,G_2,\dots,G_m$ be graphs and let $n_i$ denote the number of vertices of $G_i$ for $1\leq i\leq m$.
Let $G$ be the Cartesian product of $G_1,G_2,\dots,G_m$, and $n = |V(G)| = n_1 n_2 \dots n_m$.
Let $\Delta_i$ be the maximum degree of $G_i$, for $1\leq i\leq m$.
Then we have
\begin{itemize}
\item[(a)]
$$\frac{\min\{\iota_e(G_i) : 1\leq i\leq m \}n}{4(\Delta_1+\dots+\Delta_m)^2} \leq c_{\infty}(G) \leq \frac{nc_{\infty}(G_1)}{n_1}.$$
Note that the upper bound holds for any ordering of the graphs.
\item[(b)]
If every $G_i$ is a path and $n_1 = \max\{n_i : 1\leq i\leq m\}$, then
$$\frac{n}{4n_1 m^2} \leq c_{\infty}(G) \leq \frac{n}{n_1}.$$
\item[(c)]
If every $G_i$ is a cycle, $n_1 = \max\{n_i : 1\leq i\leq m\}$, and $n_1$ is even, then
$$\frac{n}{2n_1 m^2} \leq c_{\infty}(G) \leq \frac{2n}{n_1}.$$
\end{itemize}
\end{theorem}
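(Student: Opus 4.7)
The plan is to reduce all three parts to the edge-isoperimetric lower bound of Theorem~\ref{thm:isoperimetric_bounds}(a) together with an explicit cop strategy for the upper bound. Two structural facts about Cartesian products drive the lower bound: first, $\Delta(G)=\Delta_1+\cdots+\Delta_m$, since each vertex of the product has one family of neighbours per coordinate; second, the edge-isoperimetric inequality
$$\iota_e(G) \geq \min_{1 \leq i \leq m} \iota_e(G_i),$$
which I would establish by a slicing argument: given a set $S \subseteq V(G)$ with $|S| \leq n/2$, decompose the edge boundary $\partial S$ into horizontal and vertical contributions by fixing all but one coordinate, bound each slice's boundary using the isoperimetric number of the corresponding factor (complementing a slice when it exceeds half its factor, which is where the constants widen), and aggregate. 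Substituting these into Theorem~\ref{thm:isoperimetric_bounds}(a) yields the lower bound of part (a); the constant $4$ in the denominator absorbs the losses from the complementation.

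For the upper bound of part (a), I would exhibit an explicit strategy. Let $H$ denote the Cartesian product of $G_2, G_3, \ldots, G_m$, so $|V(H)|=n/n_1$, and identify $V(G) = V(G_1) \times V(H)$. For each $w \in V(H)$ the fibre $F_w := V(G_1) \times \{w\}$ induces a subgraph of $G$ isomorphic to $G_1$. Place $c_\infty(G_1)$ cops on each fibre, for a total of $(n/n_1)\,c_\infty(G_1)$ cops. The cops on each fibre simulate a winning strategy for the fast-robber game on $G_1$, played against a virtual robber whose position is the first coordinate of the actual robber. Since any path of the robber in $G$ projects to a walk in $G_1$ (edges in the other factors project to staying put), the virtual robber's play is a legal fast-robber play in $G_1$, and the strategy eventually places a simulated cop at the virtual robber's vertex in every fibre; in the fibre that currently contains the real robber, this cop coincides with her.

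Parts (b) and (c) follow by instantiating (a). For (b), each $G_i$ is a path, so $\Delta_i \leq 2$ and $\Delta_1+\cdots+\Delta_m \leq 2m$; since the edge-isoperimetric number of a path on $k$ vertices is $1/\lfloor k/2\rfloor \geq 2/k$, we have $\min_i \iota_e(G_i) = \iota_e(P_{n_1}) \geq 2/n_1$; and $c_\infty(P_{n_1})=1$ because a path is a tree, so the characterisation of Section~\ref{chp:copwin} implies that one cop suffices. For (c), each $G_i$ is a cycle, so $\Delta_i = 2$ and $\sum \Delta_i = 2m$; the even cycle $C_{n_1}$ has $\iota_e(C_{n_1})=4/n_1$, attained by the half-cycle; and $c_\infty(C_{n_1})=2$ for $n_1 \geq 4$, since two cops placed antipodally pen the robber into one arc and steadily close in, whereas one cop is insufficient by Lemma~\ref{lem:blocks_domination_one} because $C_{n_1}$ is $2$-connected with domination number at least two.

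The main obstacle is the isoperimetric inequality for Cartesian products: proving $\iota_e(G) \geq \min_i \iota_e(G_i)$ cleanly requires handling slices that are larger than half their respective factor via a complementation trick, which is where the factor $4$ in part (a) enters. The strategic side is more straightforward: one need only check that simulating the $G_1$-strategy in parallel across all fibres is legal, since each fibre is an isometric copy of $G_1$ and simulated cop moves stay within their fibre, and that capture in the virtual $G_1$-game implies capture in the real game inside the fibre containing the robber.
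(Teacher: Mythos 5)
Your upper-bound argument for part (a) is essentially the paper's: running the same $G_1$-simulation in every fibre $F_w$ against the common virtual robber places the real cops exactly on the slabs $\{u_1\}\times V(G_2)\times\cdots\times V(G_m)$ that the paper uses, and the key verification (the robber's projected path is cop-free in $G_1$ precisely because the cops fill entire slabs) is the same. For the lower bound of (a), the paper simply cites Chung and Tetali for $\iota_e(G)\geq \min_i\iota_e(G_i)/2$; your slicing sketch aims at the same fact, but note that your stated target $\iota_e(G)\geq\min_i\iota_e(G_i)$ with no loss is not what the complementation argument delivers (your own remark about the constant $4$ absorbing the losses concedes this), so what you would actually establish is the factor-$1/2$ version --- which does suffice for the constant in (a).

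The genuine gap is in parts (b) and (c): deriving their lower bounds by ``instantiating (a)'' loses a factor of $2$ against the stated constants. For (b), part (a) gives
$c_{\infty}(G)\geq \min_i\iota_e(P_{n_i})\cdot n/\bigl(4(2m)^2\bigr)\geq (2/n_1)\,n/(16m^2)=n/(8n_1m^2)$,
not the claimed $n/(4n_1m^2)$; similarly for (c) you obtain $n/(4n_1m^2)$ rather than $n/(2n_1m^2)$. The wasted factor of $2$ is exactly the one introduced by the general product inequality $\iota_e(G)\geq\min_i\iota_e(G_i)/2$, and it is avoidable here because the edge-isoperimetric numbers of the product graphs themselves are known exactly (Azizo\u{g}lu and E\u{g}ecio\u{g}lu): $\iota_e(G)=\lfloor n_1/2\rfloor^{-1}\geq 2/n_1$ for the product of paths and $\iota_e(G)=4/n_1$ for the product of cycles with $n_1$ even. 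The paper plugs these directly into Theorem~\ref{thm:isoperimetric_bounds}(a), i.e.\ $c_{\infty}(G)\geq \iota_e(G)\,n/(2\Delta^2)$ with $\Delta=2m$, which produces the stated constants. As written, your route proves correct but strictly weaker bounds for (b) and (c), so it does not establish the theorem as stated; the remaining ingredients you supply ($c_{\infty}(P_{n_1})=1$, $c_{\infty}(C_{n_1})\leq 2$, and the isoperimetric numbers of the factors) are fine.
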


\begin{remark}
When every $G_i$ is isomorphic to an edge, it has been proved~\cite{interval_chordal_planar} using other techniques
that there exist constants $\alpha_1,\alpha_2>0$ such that
$$\frac{\alpha_1 n}{m\sqrt m} \leq c_{\infty}(G) \leq \frac{\alpha_2 n}{m}.$$
\end{remark}

\begin{proof}

\begin{itemize}
\item[(a)]
Chung and Tetali~\cite{products_isoperimetric_chung} have proved that
$$\iota_e(G) \geq \min\{\iota_e(G_i) : 1\leq i\leq m\} / 2.$$
Noting that $\Delta = \Delta_1 + \dots + \Delta_m$,
the lower bound follows from part~(a) of Theorem~\ref{thm:isoperimetric_bounds}.

For the upper bound we give a strategy for $nc_{\infty}(G_1) / n_1$ cops to capture a robber in $G$.
Let $k = c_{\infty}(G_1)$.
We consider two games:
the first one, which we call the \emph{real game}, is a game with $nk / n_1$ cops played in $G$;
and the second one, the \emph{virtual game}, 
is a game in which $k$ virtual cops are capturing a {virtual robber} in $G_1$.
Given a winning strategy for the cops in the virtual game, we give a capturing strategy for the cops in the real game.
We translate the moves of the cops from the virtual game to the real game, and translate the moves of the robber from the real game to the virtual game, in such a way that all the translated moves are valid, and if the robber is captured in the virtual game, then she is captured in the real game as well.
By definition, there is a winning strategy for the cops in the virtual game.
Hence, the real cops have a winning strategy in the real game.

For every virtual cop, we put $n/n_1 = n_2 n_3 \dots n_m$ real cops in the real game,
such that if the virtual cop is in $u_1 \in V(G_1)$, then the real cops occupy $\{u_1\} \times V(G_2) \times \dots \times V(G_m)$.
Also, if the real robber is at $(v_1,\dots,v_m) \in G$, then the virtual robber is at $v_1\in G_1$.
It is not hard to see that the real cops can move in such a way that these constraints hold throughout the games.
Hence, once the virtual robber has been captured, the real robber has also been captured, and the proof is complete.

\item[(b)]
Azizo\u{g}lu and E\u{g}ecio\u{g}lu~\cite{grids_isoperimetric} have proved that
$$\iota_e(G) = \left\lfloor \frac{n_1}{2}\right\rfloor^{-1} \geq \frac{2}{n_1}.$$
As $G$ has $n$ vertices and maximum degree $2m$,
the lower bound follows from part (a) of Theorem~\ref{thm:isoperimetric_bounds}.
The upper bound follows from part (a) of the present theorem, since $G_1$ is a path and has $c_{\infty}(G_1) = 1$.

\item[(c)]
Azizo\u{g}lu and E\u{g}ecio\u{g}lu~\cite{cylinders_isoperimetric_number} have proved that
$$\iota_e(G) = \frac{4}{n_1}.$$
As $G$ has $n$ vertices and maximum degree $2m$,
the lower bound follows from part (a) of Theorem~\ref{thm:isoperimetric_bounds}.
The upper bound follows from part (a) of the present theorem, since $G_1$ is a cycle and  has $c_{\infty}(G_1) = 2$.
\qedhere
\end{itemize}

\end{proof}


\section{The Same-Speed Variation}
\label{chp:samespeed}

In the concluding remarks of~\cite{variations} a variation is proposed in which all players have the same speed.
In this short section we prove that the cop number of a graph in this variation
equals  the cop number of a related graph in the original Cops and Robber game,
in which all players have speed one.

\begin{definition} [$c_{a,b}(G)$, $G_t$]
Let $a$ and $b$ be positive integers.
Let $c_{a,b}(G)$ denote the cop number of $G$ when the robber has speed $a$
and the cops have speed $b$.
That is, each cop can move along a path of length $b$ in his turn,
and the robber can move along a cop-free path of length $a$ in her turn.
Let $t$ be a positive integer, and let $G_{t}$ be the graph with vertex set $V(G)$
with $u,v\in V(G_t)$ being adjacent if their distance in $G$
is at most $t$.
\end{definition}

\begin{theorem}
For any graph $G$ and any positive integer $t$ we have
$$c_{t,t}(G) = c_{1,1}(G_t).$$
\end{theorem}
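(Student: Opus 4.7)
The plan is to establish equality by proving $c_{t,t}(G) \leq c_{1,1}(G_t)$ and $c_{1,1}(G_t) \leq c_{t,t}(G)$ separately, using strategy-translation arguments in each direction.

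For the first inequality I would take a winning strategy $\tau$ for $k$ cops in the standard game on $G_t$ and simulate it with $k$ cops in the speed-$t$ game on $G$. Each edge of $G_t$ corresponds to a path of length at most $t$ in $G$, so $\tau$'s cop moves translate into legal speed-$t$ cop moves in $G$. Conversely, after any cop-free move of length at most $t$ in $G$, the robber's endpoint is at $G$-distance at most $t$ from her previous vertex, hence a legal $G_t$-move. Thus the virtual $G_t$-game stays in sync with the real game, and a capture in the virtual game is a capture in $G$.

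The second inequality is the main obstacle, because the robber in the $c_{1,1}(G_t)$ game can legally step to any vertex at $G$-distance at most $t$, even one for which every path of length at most $t$ in $G$ passes through a cop. The key observation I would use is that any such ``teleport'' move is immediately self-defeating: if the cops currently occupy $C$ and every length-at-most-$t$ path from $r$ to $r'$ in $G$ meets $C$, then fixing one such path $P$ and a cop $c \in P \cap C$, the sub-path of $P$ from $c$ to $r'$ has length at most $t$, so $c$ is adjacent to $r'$ in $G_t$, and on the next cop move $c$ can step onto $r'$ and capture.

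Given a winning strategy $\sigma$ for $k$ cops in the speed-$t$ game on $G$, I would translate it into a winning strategy in $G_t$ as follows: so long as the robber plays a move that is also legal under the speed-$t$ rules in $G$, the cops respond by $\sigma$; if she ever exploits her extra freedom and teleports past a cop, the cops abandon $\sigma$ for a single round to execute the one-move capture above. The geometric observation of the preceding paragraph is doing essentially all of the work, and once it is in place the reduction requires no further modification of $\sigma$.
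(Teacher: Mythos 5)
Your proposal is correct and takes essentially the same approach as the paper: both identify that the only asymmetry between the two games is the robber's ability in $G_t$ to ``teleport'' past a blocking cop, and both neutralize it with the same observation that a cop on the blocking path is within $G$-distance $t$ of the robber's new vertex and captures on the next move. Your write-up merely organizes the argument more explicitly as two separate strategy translations.
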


\begin{proof}
Consider the Cops and Robber game played in $G_t$ with both players having speed one. Call this game the \emph{original game},
and consider the Cops and Robber game played in $G$ with all players having speed $t$,  and call this game the \emph{alternative game}.
The set of possible moves for each player is almost the same in the two games,
the only difference is that there could be a possible move for the robber in the original game,
which is not possible in the alternative game:
if the robber is at $u$, and $v$ is a vertex at distance at most $t$ from $u$ (in $G$),
then she can always move from $u$ to $v$ in the original game, but,
in the alternative game, all of the $(u,v)$-paths of length at most $t$ may be blocked by a cop.

But, notice that if in some round of the original game, the robber moves from $u$ to $v$ in her turn,
such that there is a $(u,v)$-path of length at most $t$ in $G$ with a cop standing at one of its internal vertices,
then the robber will be captured in the next round.
This is because that condition implies that the cop's vertex is at distance at most $t$ from $v$ (in $G$),
hence he can capture the robber in the next round.
We deduce that such a move results in an immediate capture in the original game,
and the robber better not do it.
Apart from that kind of move, which we saw does not really give an advantage to the robber,
the set of moves for the players are the same in the two games, and the equality follows.
\end{proof}


\section{Open Problems}
\label{chp:future}

In this section we present a few open questions and research directions on this game.

\begin{enumerate}
%

%

\item
When $np \geq 4.2 \log n$,
in part (a) of Theorem~\ref{thm:random_bounds},
we have a.a.s~determined the cop number of $\mathcal{G}(n,p)$
up to an $O(\log(np))$ factor.
Can one close this gap?

\item
In part (b) of Theorem~\ref{thm:bounds_products} we have determined
$c_{\infty}$ for the Cartesian product of $m$ paths,
up to an $O(m^2)$ factor. Can one close the gap?

\item
Fomin, Golovach, and Kratochv\'{\i}l~\cite{fast_robber_first} proved that computing $c_{\infty}(G)$ is NP-hard.
Is this problem in NP?
To show that this problem is in NP, one needs to prove that there is always an efficient
way to describe the cops' strategy. 

\end{enumerate}

\noindent\textbf{Acknowledgement.}
The author is grateful to Nick~Wormald for continuous support and lots of fruitful discussions.

\bibliographystyle{amsplain}
\bibliography{graphsearching}

\end{document}